\documentclass[12 pt]{amsart}
\usepackage{amscd,amsfonts,amssymb,amsmath}
\usepackage{hyperref}
\usepackage{epsfig}
\usepackage{mathtools}
\usepackage[T5]{fontenc}

\newtheorem{theorem}{Theorem}[section]
\newtheorem{corollary}[theorem]{Corollary}
\newtheorem{lemma}[theorem]{Lemma}
\newtheorem{proposition}[theorem]{Proposition}

\theoremstyle{definition}
\newtheorem{definition}[theorem]{Definition}
\newtheorem{conjecture}[theorem]{Conjecture}

\newtheorem{problem}[theorem]{Problem}

\theoremstyle{remark}
\newtheorem{remark}[theorem]{Remark}
\newtheorem*{ack}{Acknowledgments}

\numberwithin{equation}{section}
\usepackage[all,cmtip]{xy}
\usepackage{xcolor}

\usepackage[shortlabels]{enumitem}

\newcommand{\Alex}{\operatorname{Alex}}
\newcommand{\Aut}{\operatorname{Aut}}

\newcommand{\Conj}{\operatorname{Conj}}
\newcommand{\Core}{\operatorname{Core}}

\newcommand{\cl}{\operatorname{cl}}
\newcommand{\Cs}{\operatorname{Cs}}
\newcommand{\cw}{\operatorname{cw}}

\newcommand{\T}{\operatorname{T}}

\newcommand{\R}{\operatorname{R}}

\newcommand{\GL}{\operatorname{GL}}
\newcommand{\Aff}{\operatorname{Aff}}
\newcommand{\Nuc}{\operatorname{Nuc}}
\newcommand{\SC}{\operatorname{SC}}
\newcommand{\Z}{\mathbb{Z}}

\newcommand{\id}{\mathrm{id}}

\newcommand{\bij}{\xrightarrow{\sim}}
\renewcommand{\epsilon}{\varepsilon}
\newcommand{\inv}{^{-1}}
\renewcommand{\k}{\Bbbk}
\newcommand{\F}{\mathbb F}
\newcommand{\chara}{\operatorname{char}}
\begin{document}
\title[Idempotents, automorphisms, and commutators of quandle algebras]{Idempotents, automorphism groups, and commutator widths of quandle algebras}
\author{Birama Sangare}
\author{L\d\uhorn c Ta}

\date{\today}

\address{Novosibirsk State  University, 2 Pirogova Street, 630090, Novosibirsk, Russia.}
\email{b.sangare@g.nsu.ru}

\address{University of Pittsburgh, Pittsburgh, Pennsylvania 15260, United States.}
\email{ldt37@pitt.edu}

\subjclass[2020]{Primary 17D99; Secondary 16S34, 17A36, 20N02, 57K12}
\keywords{Automorphism group, commutator width, idempotent, Latin quandle, nonassociative algebra, quandle ring}

\begin{abstract} The paper develops the theory of quandle algebras. We show that over integral domains of characteristic other than 2, quandle algebras of ordered commutative quandles have no nontrivial idempotents. We also compute the automorphism groups of quandle algebras of trivial quandles and dihedral quandles of odd orders over arbitrary commutative rings, with partial results for dihedral quandles of even orders. Finally, a computer search provides the first examples of quandle algebras of commutator width 2.
\end{abstract}
\maketitle


\section{Introduction}
Quandles are algebraic structures with a nonassociative binary operation that satisfies axioms encoding the three Reidemeister moves of diagrams of links in three-space. Although quandles originated from knot theory \cite{Joyce, Matveev}, they also enjoy applications in quantum algebra \cite{Andruskiewitsch, BES, Eisermann}, algebraic geometry \cite{LT2}, and the theory of Riemannian symmetric spaces \cite{Loos}, among other areas of mathematics. In 1982, Joyce \cite{Joyce} and Matveev \cite{Matveev} independently introduced quandles and used them to construct complete invariants of non-split links up to orientation reversal. Of course, it is generally difficult to determine whether two quandles are isomorphic. This motivated a search for new properties and invariants of quandles themselves. We refer the reader to the articles \cite{Carter, Kamada, Nelson} for introductions to the theory and its historical development.
\par

Automorphisms of quandles, which reveal a lot about their internal structures, have been investigated in great detail in a series of papers \cite{BDS, BarTimSin, Elhamdadi2012}. Related work \cite{BSS1, BSS2} have used ideas from combinatorial group theory to show that free quandles and link quandles are residually finite.
\par

In an attempt to linearize the study of quandles, a theory of quandle rings analogous to the classical theory of group rings was proposed in \cite{BPS}, where several interconnections between quandles and their associated quandle rings were investigated, and an analogue of the group ring isomorphism problem for quandle rings was proposed. The work was developed further in \cite{EFT}, where examples of nonisomorphic finite quandles with isomorphic quandle rings have been given.
\par

The purpose of this paper is to develop the theory of quandle algebras of commutative quandles. Following \cite{BPS}, given a quandle (resp.\ rack) $Q$ and a unital associative commutative ring $\Bbbk$, the quandle (resp.\ rack) algebra $\Bbbk[Q]$ is defined as the free $\k$-module generated by $Q$ equipped with the nonassociative multiplication inherited from $Q$ (see  Section \ref{sec-prelim}).

As an analogue of Kaplansky's unit conjecture in the study of group algebras, the authors of \cite{BPS-1} proposed the study of idempotents in quandle algebras. They computed idempotents and $\Bbbk$-algebra automorphisms of quandle algebras for certain quandles of small orders, including all quandles of order $3$. In particular, they computed idempotents in the quandle algebras $\Bbbk[\T]$, $\mathbb{Z}[\R_3]$, $\mathbb{Z}[\R_4]$, and $\mathbb{Z}[\operatorname{J}_{3}]$, where $\T$ is any trivial quandle, $\R_n$ is a dihedral quandle of order $n$, and $\operatorname{J}_{3}$ is the 3-element quandle of Joyce \cite{Joyce-Thesis}. 
 
The present paper is organized as follows. In Section \ref{sec-prelim}, we recall some basic definitions and examples from the theory of quandles and quandle algebras. 
 
In Section \ref{Sec 4}, we investigate idempotents in quandle rings of commutative quandles. In \cite[Conj.\ 13.19]{BES}, it was conjectured that if $Q$ is a finite Latin quandle, then the only idempotents of $\mathbb{Z}[Q]$ are the trivial idempotents. We verify this conjecture for all commutative ordered quandles; in fact, the statement holds over arbitrary integral domains $\k$ with $\chara \k\neq 2$ (Theorem \ref{Theorem 4.8}). Over such $\k$ with $Q$ a commutative quandle, we also show that $\k[Q]$ contains no nontrivial idempotents of length $2$, and we completely characterize nontrivial idempotents of length $3$ (Theorem \ref{Theorem 4.6}). This addresses \cite[Conj.\ 3.10]{MBMD}.

In Section \ref{sec-auto}, we compute the automorphism groups of the quandle algebras of all finite trivial quandles and dihedral quandles of odd orders. More precisely, we prove that $\Aut(\k[\T_{n}])\cong\Aff_{n-1}(\k)$, where $\T_n$ is
the trivial quandle of order $n$ (Theorem \ref{thm:aff}). Also, we prove that $\Aut(\k[\R_{2n}])\cong \Aff_1(\SC_n(\k))\rtimes \Aut(\k[\R_n])$ for all odd integers $n\geq 1$ (Theorem \ref{thm:r2n}), where $\SC_n(\k)$ is the commutative ring of symmetric circulant matrices. Finally, we construct a canonical embedding
$\Aut(\k[\R_{2n}])\hookrightarrow\GL_n(\k)\times\Aut(\k[\R_n])$ over suitable rings $\k$ for each even integer $n\geq 2$ (Theorem \ref{prop:embedding}). This addresses \cite[Prob.\ 6.5]{BPS-1}.

In Section \ref{commutator-width}, we use a computer search to compute commutator widths of quandle rings $\F_q[\R_n]$ (Theorem \ref{Thm:cw-R_n}) and $\F_q[Q]$ (Theorems \ref{Thm: cw-Q} and \ref{thm:cw-8}) for certain values of $n$ and $q$, where $Q$ is a noncommutative quandle of order at most $7$. In particular, we provide the first known examples of quandle algebras of commutator width greater than $1$.

\section{Preliminaries}\label{sec-prelim}
 Recall that a {\it magma} is a set $Q$ equipped with a binary operation $(x,y) \mapsto x * y$, and a \emph{magma homomorphism} is a map between magmas that preserves the binary operations~$*$. We say that a magma $(Q,*)$ is a \emph{quandle} if the following axioms hold:
\begin{enumerate}[(Q1)]
\item $x*x=x$ for all $x \in Q$,
\item\label{Q2} For all $x \in Q$, the right-multiplication map $S_x\colon Q \to Q$ defined by $y \mapsto y\ast x$ is a magma automorphism of $(Q,\ast)$. Equivalently, each $S_x$ is a permutation, and $*$ is right-distributive.
\end{enumerate}

A magma satisfying only axiom \ref{Q2} is called a {\it rack}. Many interesting examples of quandles come from groups. This paper studies the following families of quandles.
\par

\begin{itemize}
\item 
A quandle  $Q$ is called {\it trivial} if $x*y=x$ for all $x, y \in Q$.  Unlike with groups, a trivial quandle can have arbitrary number of elements. We denote the $n$-element trivial quandle by $\T_n$ and an arbitrary trivial quandle by $\T$.
\item If $G$ is a group, then the binary operation $x*y= y^{-1} x y$ turns $G$ into the quandle $\Conj(G)$ called the {\it conjugation quandle} of $G$. More generally, the \emph{$m$-fold conjugation quandle} $\Conj_m(G)$ with $m\in\Z$ is defined by $x*y=y^{-m}xy^m$.
\item A group $G$ with the binary operation $x*y= y x^{-1} y$ turns the set $G$ into the quandle $\Core(G)$ called the {\it core quandle} of $G$. In particular, for the cyclic group $G= \mathbb{Z}_n$ of order $n$, we call $\Core(\Z_n)$ the {\it dihedral quandle} and denote it by $\R_n$.
\item Let $G$ be a group and $\varphi \in \Aut(G)$. Then the set $G$ with binary operation $x * y = \varphi(xy^{-1})y$ forms a quandle $\Alex(G, \varphi)$ called the  {\it generalized Alexander quandle} of $G$ with respect to $\varphi$. If $G$ is an additive abelian group, the quandle $\Alex(G, \varphi)$ defined by $$x\ast y= \varphi(x)+(\id-\varphi)(y),$$  is called an {\it Alexander quandle}. 
\item In particular, let $M$ be a $\Z[1/2]$-module, and let $\phi\in\Aut(M)$ denote multiplication by $1/2$. Then $\Alex(M,\phi)$ is called a \emph{midpoint quandle}. In particular, $(\Z_{2n+1},*)$ is called a \emph{cyclic midpoint quandle} and denoted by $C_{2n+1}$. See \cite{LT1} for a reference.
\end{itemize}

Let us recall some special classes of quandles.
\begin{enumerate}
\item A quandle $(Q,\ast)$ is {\it commutative} if $x\ast y=y\ast x$. For example, the dihedral quandle $\R_3$ is commutative. More generally, every midpoint quandle is commutative.
 \item  A quandle $(Q,\ast)$ is {\it medial} if $(x\ast y)\ast(t\ast z)=(x\ast t)\ast(y\ast z)$ for any $x, y, t, z \in Q$. In \cite{LT1}, it is shown that medial commutative quandles are precisely the midpoint quandles.
 
\item  A quandle $Q$ is called \emph{semi-Latin} (resp.\ {\it Latin}) if, for all $x\in Q$, the map \[L_x \colon Q \to Q,\qquad L_x(y) \coloneq x *y\] is surjective (resp.\ bijective). For example, if $M$ is an additive abelian group, then the Alexander quandle $\operatorname{Alex}(M,\varphi)$ is Latin if and only if $\operatorname{id}-\varphi$ is an automorphism of $M$. Note that every commutative quandle is Latin.
\end{enumerate}

Next, we focus on some definitions and results from \cite{BPS}. Let $\Bbbk$ be a unital associative commutative ring. From now on, except in situations where there is more than one binary operation on a set, we denote the multiplication in a quandle (resp. rack) by $(x,y) \mapsto xy$.

Given a quandle $Q$, let  $\Bbbk[Q]$ be the free $\k$-module generated by $Q$:
$$
\Bbbk[Q] =\Big\{ \sum_{i=1}^n\alpha_i x_i \mid \alpha_i \in \Bbbk,~ x_i \in Q,~n<\infty \Big\}.
$$
Make $\k[Q]$ into a nonunital nonassociative $\k$-algebra via the multiplication  $$\big(\sum_i\alpha_i x_i\big) \big(\sum_j\beta_j x_j\big) \coloneq\sum_{i,j}\alpha_i\beta_j (x_i x_j).$$
We call $\k[Q]$ the \emph{quandle algebra} of $Q$ with coefficients in $\Bbbk$. Note that $\Bbbk[Q]$ is associative if and only if $Q$ is a trivial quandle. If $Q$ is a rack, then its rack algebra $\Bbbk[Q]$ is defined analogously.
\par

The \emph{augmentation map} is the surjective $\k$-algebra homomorphism
$$
\varepsilon \colon \Bbbk[Q] \to \Bbbk
$$
defined by the formula  $$\varepsilon \big(\sum_i\alpha_i x_i\big)\coloneq \sum_i\alpha_i .$$
The \emph{augmentation ideal} of $\k[Q]$ is the two-sided ideal $\Delta_{\Bbbk}(Q) = \ker(\varepsilon)$. It is easy to see that $\{x-y\mid x, y \in Q \}$ is  a generating set for $\Delta_{\Bbbk}(Q)$ as a $\k$-module. Further, if $x_0 \in Q$ is a fixed element, then the set $\big\{x-x_0 \mid x \in Q \setminus \{ x_0\} \big\}$ is a basis for $\Delta_{\Bbbk}(Q)$ as an $\Bbbk$-module. For convenience, we denote $\Delta_\mathbb{Z}(Q)$ by $\Delta(Q)$. 
\par

We recall a result that characterizes trivial quandles in terms of their augmentation ideals. 
\par

\begin{theorem}[{\cite[Thm.\ 3.5]{BPS}}] \label{deltasqzero}
A quandle $Q$ is trivial if and only if $\Delta_{\Bbbk}^2(Q)=\{0\}$.
\end{theorem}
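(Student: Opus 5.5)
We prove both directions directly from the generating set $\{x-y \mid x,y\in Q\}$ of $\Delta_\k(Q)$. Since $\Delta^2_\k(Q)$ is the $\k$-span of all products $ab$ with $a,b\in\Delta_\k(Q)$, and multiplication is $\k$-bilinear, $\Delta^2_\k(Q)=\{0\}$ holds if and only if
\[
(x-y)(z-w)=0 \qquad\text{for all } x,y,z,w\in Q.
\]
Expanding, this product equals $xz - xw - yz + yw$. So the theorem reduces to a statement about these four-term elements in the free $\k$-module $\k[Q]$.

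For the forward direction, suppose $Q$ is trivial, so $uv=u$ for all $u,v\in Q$. Then
\[
(x-y)(z-w)=x-x-y+y=0,
\]
and by bilinearity every product of two elements of $\Delta_\k(Q)$ vanishes. Hence $\Delta^2_\k(Q)=\{0\}$.

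For the converse, assume $\Delta^2_\k(Q)=\{0\}$ and fix $x,y\in Q$. The key choice is the specialization $z=y$, $w=x$, $y=x$ in the displayed condition. Using (Q1), i.e.\ $x x = x$, we get
\[
0=(y-x)(y-x)\cdot 0 + (x-y)(y-x) \quad\text{is not needed; instead take}\quad 0=(x-y)(y-y)
\]
which is trivial, so the useful substitution is $(x-y)(x-y)$. Its expansion is
\[
xx - xy - yx + yy = x + y - xy - yx .
\]
Thus $xy+yx = x+y$ in $\k[Q]$. Since $Q$ is a $\k$-basis of $\k[Q]$ and $\k$ is a nonzero ring, we can compare coefficients. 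The left side is a sum of two basis elements $xy$ and $yx$ with coefficient $1$, and the right side is $x+y$. This forces the multisets $\{xy,yx\}$ and $\{x,y\}$ to coincide, assuming $x\neq y$; the case $x=y$ is immediate from (Q1).

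The only step needing care is extracting $xy=x$ from this multiset equality, since there are two possibilities. If $xy=x$ we are done. Otherwise $xy=y$, which would mean $xy=yy$, i.e.\ $S_y(x)=S_y(y)$. But by (Q2) $S_y$ is a bijection, so this gives $x=y$, contradicting $x\neq y$. The coefficient comparison itself is also harmless when $\chara\k=2$: we are comparing two genuine multisets of size two with coefficients $1$, and if $xy=yx$ the left side would be $2(xy)$, which can only equal $x+y$ with $x\neq y$ if $2=0$ and $1=0$ simultaneously, which is impossible. Hence $xy=x$ for all $x,y$, and $Q$ is trivial.
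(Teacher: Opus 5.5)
Your proof is correct. The paper does not prove this statement itself; it quotes it from \cite[Thm.\ 3.5]{BPS}, so there is no in-paper argument to compare with. Your route is a natural, self-contained direct proof:
\begin{itemize}
\item bilinearity gives the forward direction;
\item for the converse, the single relation $(x-y)^2=0$ gives $xy+yx=x+y$;
\item comparing supports in the free module $\k[Q]$ and using injectivity of $S_y$ excludes $xy=y$.
\end{itemize}

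Two small clean-ups are worth making. First, delete the garbled passage about the substitution ``$z=y$, $w=x$, $y=x$'' and the display ending in ``is not needed''; the substitution you actually use is $z=x$, $w=y$. Second, the coefficient comparison reads more clearly as follows. For $x\neq y$, the element $x+y$ has support exactly $\{x,y\}$ because $1\neq 0$. If $xy=yx$, then $xy+yx=2\,(xy)$ has support of size at most one. Hence $xy\neq yx$ and $\{xy,yx\}=\{x,y\}$, with no separate characteristic-$2$ discussion needed.

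You are right that $\k\neq 0$ must be assumed. Over the zero ring, $\Delta_\k^2(Q)=\{0\}$ for every quandle $Q$, so the converse fails there.
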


\section{Idempotents in quandle rings of commutative quandles}\label{Sec 4}

Given a rack $Q$ and a unital associative commutative ring $\k$, let $I(\k[Q])\subseteq \k[Q]$ denote the set of idempotent elements of $\k[Q]$. 
The following proposition was proven in \cite{BPS-1}.

\begin{proposition} \label{trivial-quandle-idempotents}
If $\T$ is a trivial quandle, then $ I(\k[\T]) =  x_0 + \Delta_\k (\T)$, where  $x_0 \in \T$ is a fixed element.
\end{proposition}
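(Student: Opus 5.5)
The plan is to compute the multiplication of $\k[\T]$ in closed form. Once that is done, the idempotent equation becomes a scalar condition on the augmentation.

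\emph{Step 1: closed form for products.} Since $xy=x$ for all $x,y\in\T$, bilinearity gives, for $u=\sum_i\alpha_i x_i$ and $v=\sum_j\beta_j x_j$,
\[
uv=\sum_{i,j}\alpha_i\beta_j\, x_i=\Big(\sum_j\beta_j\Big)\sum_i\alpha_i x_i=\varepsilon(v)\,u .
\]
In particular $u^2=\varepsilon(u)\,u$ for every $u\in\k[\T]$. This is also consistent with Theorem \ref{deltasqzero}: if $\varepsilon(v)=0$ then $uv=0$.

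\emph{Step 2: reduce idempotency to a condition on $\varepsilon(u)$.} By Step 1, $u$ is idempotent if and only if $\varepsilon(u)\,u=u$. Applying the ring homomorphism $\varepsilon$ gives $\varepsilon(u)^2=\varepsilon(u)$, so $e\coloneq\varepsilon(u)$ is an idempotent of $\k$.
\begin{itemize}
\item If $e=1$, the condition $\varepsilon(u)u=u$ holds automatically. Hence every $u$ with $\varepsilon(u)=1$ is idempotent.
\item The set $\{u:\varepsilon(u)=1\}$ is exactly $x_0+\Delta_\k(\T)$, because $u-x_0\in\ker\varepsilon$.
\end{itemize}
This proves the inclusion $x_0+\Delta_\k(\T)\subseteq I(\k[\T])$.

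\emph{Step 3: the reverse inclusion, where the hypotheses matter.} Suppose $u$ is idempotent with $e=\varepsilon(u)\neq 1$.
\begin{itemize}
\item If $e=0$, then $u=\varepsilon(u)u=0$.
\item If $\k$ has a nontrivial idempotent $e$, then $e\,x_0$ is an idempotent with augmentation $e\notin\{0,1\}$.
\end{itemize}
So the statement must be read with the standing conventions under which it is meant. These are: $\k$ has no idempotents other than $0,1$ (for instance an integral domain, as in the rest of this section), and ``idempotent'' means nonzero idempotent (equivalently, $0$ is adjoined separately as the trivial idempotent).

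Under these conventions, $e\in\{0,1\}$, and $e=0$ forces $u=0$. Hence every nonzero idempotent has augmentation $1$, i.e. lies in $x_0+\Delta_\k(\T)$. For a general commutative ring the same computation gives the complete answer
\[
I(\k[\T])=\{u\in\k[\T] : \varepsilon(u)u=u\}=\bigcup_{e^2=e\in\k} e\big(x_0+\Delta_\k(\T)\big),
\]
and I would record this as a remark.

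The only real obstacle is this bookkeeping in Step 3, namely pinning down the precise hypotheses under which the displayed equality in the proposition holds. The algebra in Steps 1 and 2 is immediate.
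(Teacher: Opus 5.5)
Your proof is correct and is the standard argument. The paper gives no proof of its own and simply cites \cite{BPS-1}; the identity $uv=\varepsilon(v)u$ that drives your Steps 1--2 is the same one the paper later uses in the proof of Theorem~\ref{thm:aff}. Your caveat in Step 3 is also right and worth keeping: read literally (arbitrary $\k$, with $I$ the set of all idempotents), the displayed equality fails because of $0$ and because of $e x_0$ for any nontrivial idempotent $e\in\k$, so the proposition must be read for nonzero idempotents over a ring whose only idempotents are $0$ and $1$ (e.g.\ an integral domain; Conjecture~\ref{conjid} likewise tacitly excludes $0$), and your formula $I(\k[\T])=\bigcup_{e^2=e\in\k}e\bigl(x_0+\Delta_\k(\T)\bigr)$ is the correct general statement.
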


Given a rack $Q$ and a unital associative commutative ring $\k$, we say that an element $u\in\k[Q]$ is \emph{trivial} if $u\in Q$; in particular, $u\in I(\k[Q])$ and $\epsilon(u)=1$. It is natural to formulate the following conjecture.

 \begin{conjecture}[{\cite[Conj.\ 3.10]{MBMD}}]\label{conjid}    
For a semi-Latin quandle $Q$, the only idempotents of $\mathbb{Z}[Q]$ are the trivial elements. That is, $I(\Z[Q])=Q$.
\end{conjecture}

For example, dihedral quandles of odd orders are Latin. The conjecture is known to be true for $\R_3$ and $\R_5$; see \cite{MBMD}.

If we replace $\Z$ with a field, then the conjecture does not hold. 
Indeed, we strengthen \cite[Prop.\ 11.18]{BES} as follows.

\begin{proposition}\label{prop:unital}
    Let $\k$ be a unital commutative ring, and let $Q$ be a finite quandle of order $n$. Then the element
    \[
    e\coloneq \sum_{x\in X} x
    \]
    satisfies $ex=\epsilon(x)e$ for all $x\in \k[Q]$. If in addition $Q$ is Latin, then the following also hold:
    \begin{enumerate}
        \item For all $x\in\k[Q]$, we have $xe=\epsilon(x)e$.
        \item If $n\in\k^\times$, then $\frac{1}{n}e$ is idempotent.
    \end{enumerate}
\end{proposition}

\begin{proof}
    To prove the first claim, it suffices to show that $ex=e$ for all $x\in Q$; to prove the second claim, it suffices to show that $xe=e$ for all $x\in Q$ when $Q$ is Latin. Since $Q$ is finite, the first equality follows from quandle axiom Q2, and the second holds because $Q$ is Latin. The third claim is proven using the same direct calculation used to prove \cite[Prop.\ 11.18]{BES}.
\end{proof}

\subsection{Results}
In this section, we investigate idempotents of the quandle ring $\k[Q]$ for a commutative (hence Latin) quandle $Q$, where $\k$ is an integral domain, i.e., an associative and commutative ring with unity and without zero divisors. Let us prove that a commutative quandle does not contain a nontrivial subquandle with more than one element.
\begin{lemma}
\label{pro2.1}
Let $Q$ be a commutative quandle with more than one element. Then $Q$ is not a trivial quandle.
\end{lemma}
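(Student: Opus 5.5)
Suppose for contradiction that $Q$ is both commutative and trivial, and pick two distinct elements $x\neq y$ in $Q$. The plan is to compare the two products $xy$ and $yx$. By triviality, $xy = x$ and $yx = y$. By commutativity, $xy = yx$. Chaining these gives $x = xy = yx = y$, which contradicts $x\neq y$. Hence a commutative quandle with at least two elements cannot be trivial.

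The same computation also gives the stronger statement announced just before the lemma: a commutative quandle $Q$ has no trivial subquandle with more than one element. Any subquandle $P\subseteq Q$ inherits commutativity from $Q$, so we may apply the lemma to $P$ itself. For the strengthened form, we note only that if $x,y\in P$ satisfy $xy=x$ and $yx=y$, then commutativity forces $x=y$.

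There is no real obstacle here; the argument uses only the defining identities and does not need quandle axioms (Q1) or (Q2) at all. The one thing to check is that the lemma's hypothesis, that $Q$ has more than one element, is exactly what supplies the pair of distinct elements $x\neq y$. This is necessary, since the one-element quandle is both commutative and trivial.
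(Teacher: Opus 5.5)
Your argument is correct, but it takes a different route from the paper. You use both halves of triviality, $xy=x$ and $yx=y$, together with commutativity, so $x=y$ follows with no quandle axioms at all. In effect you prove a statement about arbitrary commutative magmas.

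The paper uses only the single relation $x*y=x$. It writes $S_x(x)=x=S_y(x)=S_x(y)$, using (Q1) for the first equality and commutativity for the last. It then concludes $x=y$ from the injectivity of $S_x$ guaranteed by (Q2).

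The paper's route buys a stronger conclusion: in a commutative quandle, $x*y=x$ alone already forces $x=y$, so $y$ is the only fixed point of $S_y$. This stronger fact is what the proof of Theorem~\ref{Theorem 4.6} actually uses when it asserts $e_{1\ast 2}\neq e_1$ and $e_{2\ast 1}\neq e_2$. A pair with $e_1*e_2=e_2*e_1=e_1$ is not a trivial subquandle, so the lemma as you prove it does not exclude it. Your argument settles the lemma and its subquandle version more simply. It would not supply that later step without the injectivity argument.
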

\begin{proof}
    Suppose that $\{x,y\}$ is a trivial subquandle of $Q$. We have $$S_x(x)=x=S_y(x)=S_x(y)$$ which implies that $x=y$.
\end{proof}
Now we introduce the notion of the length of an element of a quandle ring, which we study in the following theorem.
\begin{definition}[{\cite[Def.\ 2.6]{NP}}]
    Let $u=\sum\alpha_i e_i\in \mathbb{Z}[Q]$. We define the \emph{length} of $u$ to be the cardinality \[l(u)\coloneq|\{i: \alpha_i\neq 0\}|.\]
\end{definition} 
\begin{theorem}\label{Theorem 4.6}
   Let $\k$ be an integral domain with $\chara\k\neq 2$, and let $Q$ be a commutative quandle. Then:
   \begin{enumerate}
       \item $\k[Q]$ does not contain nontrivial idempotents of length 2. 
       \item If $u\in\k[Q]$ and $l(u)=3$, then $u$ is a nontrivial idempotent if and only if both of the following conditions hold:
       \begin{itemize}
           \item $3\in\k^\times$.
           \item There exists a subquandle $\{e_1,e_2,e_3\}\subseteq Q$ isomorphic to $\R_3$ such that $u$ equals $e\coloneq\frac{1}{3}(e_1+e_2+e_3)$ or $e_i-\frac{1}{3}e$ with $i=1,2,3$.
       \end{itemize}
   \end{enumerate}
   
\end{theorem}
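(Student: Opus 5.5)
The plan is to compute $u^2$ directly and compare coefficients in the basis $Q$ of $\k[Q]$. Since $\k$ is an integral domain with $\chara\k\neq 2$, a coefficient of the form $2\alpha\beta$ with $\alpha,\beta\neq 0$ is nonzero. The basic structural input is that for distinct $x,y\in Q$ the product $xy$ differs from both $x$ and $y$. Indeed, $xy=x=xx$ forces $y=x$ by injectivity of $L_x$ ($Q$ is Latin), and $xy=yx=y=yy$ forces $x=y$ likewise; this is essentially Lemma \ref{pro2.1}.

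For (1), write $u=ax+by$ with $x\neq y$ and $a,b\neq 0$. By commutativity,
\[
u^2=a^2x+b^2y+2ab\,(xy).
\]
Since $xy\notin\{x,y\}$, the coefficient of $xy$ in $u^2$ is $2ab\neq 0$, while its coefficient in $u$ is $0$. Hence $u^2\neq u$.

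For (2), write $u=ax+by+cz$ with $x,y,z$ distinct and $abc\neq 0$. Then
\[
u^2=a^2x+b^2y+c^2z+2ab\,(xy)+2ac\,(xz)+2bc\,(yz).
\]
First, the three products are pairwise distinct. For example, $xy=xz$ gives $y=z$ by injectivity of $L_x$, and $xy=yz=zy$ gives $x=z$ by injectivity of $L_y$. Since each product also avoids its own two factors, if say $xy\neq z$ then $xy\notin\{x,y,z\}$. In that case $xy$ appears in $u^2$ with coefficient $2ab\neq 0$ and cannot cancel against anything, so $u^2\neq u$. Therefore an idempotent forces $xy=z$, $xz=y$ and $yz=x$. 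Together with idempotency of $Q$ and commutativity, this says $\{x,y,z\}$ is a subquandle with the multiplication table of $\R_3$.

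Idempotency then becomes the system
\[
a=a^2+2bc,\qquad b=b^2+2ac,\qquad c=c^2+2ab.
\]
Subtracting the first two equations gives $(a-b)(1-a-b+2c)=0$, and similarly for the other pairs. I will split into two cases.

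In the first case $a=b=c$, and the system reduces to $a=3a^2$. Since $a\neq 0$ and $\k$ is a domain, $3a=1$, so $3\in\k^\times$ and $u=\frac13(x+y+z)$.

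In the second case, say $a\neq b$. Then $1=a+b-2c$. Now consider whether $c$ equals $a$ or $b$, using the analogous relations for the other pairs. If all three coefficients are distinct, the three linear relations $1=a+b-2c$, $1=a+c-2b$, $1=b+c-2a$ force $a=b=c$ (subtracting them yields $3(b-c)=0$ and so on, and $\chara\k\neq 2$ handles the remaining cases). This is a contradiction, so exactly two coefficients coincide, say $b=c\neq a$. Then $1=a-b$ combined with $a=a^2+2b^2$ gives $9a^2-9a+2=0$, i.e. $(3a-1)(3a-2)=0$. Since $a\neq b$ excludes $a=1/3$, we get $a=2/3$, $b=c=-1/3$. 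Again $3$ is a unit, because $3a=2$ and $3b=-1$. This $u$ is the element $e_i-\frac13(e_1+e_2+e_3)$ appearing in the statement.

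The converse is a direct verification of the same three equations with these values.

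I expect the main obstacle to be the case analysis for the polynomial system over a general integral domain. There one must carefully track where $\chara\k\neq 2$ is used and deduce invertibility of $3$ from the equations, rather than dividing by $3$.
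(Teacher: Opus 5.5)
The gap is in your sub-case where $a,b,c$ are pairwise distinct, and it cannot be closed because the statement fails in characteristic $3$. Your part (1) and your reduction in part (2) to the case where $\{x,y,z\}$ is a copy of $\R_3$ are correct. The reduction is tidier than the paper's: the paper splits the non-subquandle situation into three cases and solves a system in each, while your observation that $xy,xz,yz$ are pairwise distinct and avoid their own factors disposes of all of them via a single coefficient $2ab\neq 0$.

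In the pairwise-distinct sub-case, subtracting $1=a+b-2c$, $1=a+c-2b$, $1=b+c-2a$ gives $3(b-c)=0$. This forces $b=c$ only if $3\neq 0$ in $\k$; the hypothesis $\chara\k\neq 2$ plays no role. If $\chara\k=3$, all three relations collapse to $a+b+c=1$ and nothing is contradicted.

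Here is a counterexample. Take $\k=\F_9=\F_3[i]$ with $i^2=-1$, $\R_3=\{x,y,z\}$ with $xy=z$, $xz=y$, $yz=x$, and $u=(1-i)x+(1+i)y-z$. Working modulo $3$, the coefficients of $x,y,z$ in $u^2$ are:
\begin{itemize}
\item $(1-i)^2-2(1+i)=-2-4i=1-i$;
\item $(1+i)^2-2(1-i)=-2+4i=1+i$;
\item $1+2(1-i)(1+i)=5=-1$.
\end{itemize}
So $u$ is a nontrivial idempotent of length $3$, although $3\notin\k^\times$.

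More generally, in characteristic $3$ with $a+b+c=1$ the whole system reduces to $a+b=(a-b)^2$. This gives the idempotents $-(d^2+d)x+(d-d^2)y+(1-d^2)z$ for every $d\in\k\setminus\{0,\pm1\}$ (e.g.\ $\k=\F_3[t]$, $d=t$).

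The paper's proof has the same blind spot. For $\epsilon(u)=1$ the system only yields $3(\alpha_1-\alpha_2)\alpha_3=0$. So the conclusion the paper imports from \cite{BPS-1} (that $u$ is trivial or $\alpha_1=\alpha_2=\alpha_3=\frac13$) fails when $3=0$, and the example above has $\epsilon(u)=1$. Your argument, like the theorem, becomes valid once you add the hypothesis $\chara\k\neq 3$.

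Two smaller corrections. First, with $b=c$ and $a-b=1$, the equation $a=a^2+2b^2$ becomes $3a^2-5a+2=(3a-2)(a-1)=0$, not $9a^2-9a+2=0$. The root to discard is $a=1$, since it gives $b=c=0$; your conclusion $a=\frac23$, $b=c=-\frac13$ still stands. Second, your reading of the second family as $e_i-\frac13(e_1+e_2+e_3)=e_i-e$ is the right one; the literal $e_i-\frac13 e$ in the statement is not idempotent.
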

\begin{proof}
Since $Q$ is commutative, $Q$ does not contain a trivial subquandle with more than one element by Lemma \ref{pro2.1}.  

Let $u=\sum_{i=1}^{n}\alpha_i e_i$ be an idempotent of $\k[Q]$. If $n=2$, then
\begin{align*}
   u^2&=(\alpha_{1}e_1+\alpha_{2}e_2)(\alpha_{1}e_1+\alpha_{2}e_2)\\
   &=\alpha_{1}^{2}e_1+\alpha_{1}\alpha_{2}e_1e_2+\alpha_{1}\alpha_{2}e_2e_1+\alpha_{2}^{2}e_2\\
   &=\alpha_{1}^{2}e_1+\alpha_{1}\alpha_{2}e_{1\ast 2}+\alpha_{1}\alpha_{2}e_{2\ast1}+\alpha_{2}^{2}e_2.
\end{align*}
Clearly, $e_{1\ast 2}\neq e_2$ and $e_{2\ast1}\neq e_1$. Further, $e_{1\ast 2}\neq e_1$ and $e_{2\ast1}\neq e_2$ since $Q$ has no trivial subquandle.
Thus $u^2=u$ if and only if 
$$\alpha_{1}^{2}-\alpha_{1}
=\alpha_{2}^{2}-\alpha_{2}
     = \alpha_{1}\alpha_{2}=0.$$
         Hence, either $\alpha_{1}=\alpha_{2}=0$, $\alpha_{1}=1$ and $\alpha_{2}=0$, or  $\alpha_{1}=0$ and $\alpha_{2}=1$.
         
       For $n=3$, first suppose that $\{e_1, e_2, e_3\}$ is a subquandle. Since $\{e_1, e_2, e_3\}$ is nontrivial, it must be isomorphic to $\R_3$. If $\epsilon(u)=1$, then Case 2 of the proof of \cite[Prop.\ 4.3]{BPS-1} shows that either $u$ is trivial or $3\in\k^\times$ and $\alpha_1=\alpha_2=\alpha_3=\frac{1}{3}$. Otherwise, $\epsilon(u)=0$, so Case 1 of the proof of \cite[Prop.\ 4.3]{BPS-1} states that
       \[
       \alpha_3=-(\alpha_1+\alpha_2),\qquad \alpha_1-\alpha_2=3(\alpha_1-\alpha_2)(\alpha_1+\alpha_2).
       \]
       It follows that
       \[
       (3\alpha_3+1)(\alpha_2-\alpha_1)=0.
       \]
       Since $\epsilon(u)=0$, we deduce that either $u$ is trivial or $3\in\k^\times$ and $u$ has one of the proposed forms. 
       Conversely, the last part of Proposition \ref{prop:unital} and direct calculations show that the sums in the claim are idempotent.
       
       Now, assume that $\{e_1, e_2, e_3\}$ is not a subquandle. Then
       \begin{align*}
           u^2&=(\alpha_{1}e_1+ \alpha_{2}e_2+ \alpha_{3}e_3)(\alpha_{1}e_1+ \alpha_{2}e_2+ \alpha_{3}e_3)\\
           &=\alpha^{2}_{1}e_1+2\alpha_{1}\alpha_{2}e_{1\ast 2}+\alpha^{2}_{2}e_2+2\alpha_{2}\alpha_{3}e_{2\ast3}+\alpha^{2}_{3}e_3+2\alpha_{1}\alpha_{3}e_{1\ast3}.
       \end{align*} 
       We have the following three cases.
        \begin{itemize}
            \item Case 1: Suppose that $e_{1\ast2}=e_3$, $e_{2\ast3}=e_1$ and $e_{1\ast3}\neq e_2$. Then $u^2=u$ if and only if 
            \begin{align*}
               \alpha_{1}&= \alpha^{2}_{1}+2\alpha_{2}\alpha_{3},\\
               \alpha_{2}&=\alpha^{2}_{2},\\
               \alpha_{3}&=\alpha^{2}_{3}+2\alpha_{1}\alpha_{2},\\
               0&=\alpha_{1}\alpha_{3}.
            \end{align*}
             This system of equations has solutions: $(0,0,0)$, $(0,0,1)$, $(0,1,0)$ and $(1,0,0)$. 
           \item Case 2: Suppose that $e_{1\ast2}=e_3$, $e_{2\ast3}\neq e_1$ and $e_{1\ast3}\neq e_2$. In this case, $u^2=u$ if and only if 
            \begin{align*}
               \alpha_{1}&= \alpha^{2}_{1},\\
               \alpha_{2}&=\alpha^{2}_{2},\\
               \alpha_{3}&=\alpha^{2}_{3}+2\alpha_{1}\alpha_{2},\\
        0&=\alpha_{1}\alpha_{3}=\alpha_{2}\alpha_{3}.
            \end{align*}
           It is easy to prove that this system has the same solutions as those of Case 1.
            \item Case  3: If $e_{1\ast2}\neq e_3$, $e_{2\ast3}\neq e_1$ and $e_{1\ast3}\neq e_2$, then $u^2=u$ implies that 
            \begin{align*}
               \alpha_{1}&= \alpha^{2}_{1},\\
               \alpha_{2}&=\alpha^{2}_{2},\\
               \alpha_{3}&=\alpha^{2}_{3},\\
            0&=\alpha_{1}\alpha_{2}=\alpha_{1}\alpha_{3}=\alpha_{2}\alpha_{3}.
            \end{align*}
This system has the same solutions as the previous two.
        \end{itemize}
\end{proof}
\begin{corollary}
 Let $G$ be a group. Then: 
   \begin{enumerate}
       \item 
    If $G$ has exponent $3$, then $\mathbb{Z}[\operatorname{Core}(G)]$ does not contain nontrivial idempotents of length  $n=2,3$.
    \item If $m\in\Z$ and  $\operatorname{Conj}_m(G)$ is commutative, then $\mathbb{Z}[\operatorname{Conj}_{m}(G)]$ does not contain nontrivial idempotents.
    \end{enumerate} 
\end{corollary}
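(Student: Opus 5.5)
The plan is to reduce both parts to structural facts about the quandles involved. Part (1) should follow from Theorem \ref{Theorem 4.6}, and part (2) should collapse to a degenerate case.

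\emph{Part (1).} First I will show that $\operatorname{Core}(G)$ is commutative when $G$ has exponent $3$. Fix $x,y\in G$ and write $a=x^{-1}y$, so that $y=xa$. Then
\[
x*y=yx^{-1}y=xa\,x^{-1}\,xa=xa^2,\qquad y*x=xy^{-1}x=x\,a^{-1}x^{-1}\,x=xa^{-1}.
\]
Since $a^3=1$, we have $a^2=a^{-1}$, hence $x*y=y*x$. So $\operatorname{Core}(G)$ is a commutative quandle.

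Now I apply Theorem \ref{Theorem 4.6} with $\k=\Z$. This is an integral domain of characteristic $0\neq 2$. Item (1) of the theorem excludes nontrivial idempotents of length $2$. Item (2) says a nontrivial idempotent of length $3$ requires $3\in\k^\times$, which fails in $\Z$. Hence no nontrivial idempotents of length $2$ or $3$ exist.

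\emph{Part (2).} Here I will show that commutativity forces $G$ to be trivial. In $\operatorname{Conj}_m(G)$, take $x=1$ and any $y$. Then $1*y=y^{-m}y^m=1$, while $y*1=1^{-m}y\,1^m=y$. Commutativity therefore gives $y=1$ for every $y\in G$, so $G=\{1\}$ and $\operatorname{Conj}_m(G)$ is the one-element quandle $\{1\}$.

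Consequently $\Z[\operatorname{Conj}_m(G)]\cong\Z$, with the product given by multiplication of integers. The idempotents are the solutions of $\alpha^2=\alpha$ in $\Z$, namely $0$ and $1$. These are exactly the trivial solutions, consistent with the convention in the proof of Theorem \ref{Theorem 4.6}, where the zero solution is listed among the trivial ones.

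There is essentially no obstacle. The only point needing care is the short group computation in part (1), namely choosing the substitution $y=xa$. One should also note explicitly that the zero element is counted as a trivial idempotent, as in the proof of Theorem \ref{Theorem 4.6}.
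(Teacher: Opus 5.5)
Your proposal is correct and follows the paper's proof. Part (1) is commutativity of $\operatorname{Core}(G)$ combined with Theorem~\ref{Theorem 4.6} over $\Z$, with $3\notin\Z^\times$ ruling out length $3$; part (2) is the observation that commutativity of $\operatorname{Conj}_m(G)$ forces $G$ to be trivial. The only difference is that you verify directly, via $y=xa$ and the computation $1*y=1$, $y*1=y$, the facts that the paper cites from \cite{LT} or calls easy, and you correctly flag the convention that $0$ counts as a trivial idempotent.
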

\begin{proof}
1) It is shown in \cite[Prop.\ 3.1]{LT} that $G$ has exponent $3$ if and only if $\operatorname{Core}(G)$ is commutative. Hence, Theorem \ref{Theorem 4.6} yields the claim.

2) It is easy to prove that $\operatorname{Conj}_m(G)$ is commutative if and only if $G$ is trivial.
\end{proof}

Next, we give an additional assumption (namely, orderability) under which Theorem \ref{Theorem 4.6} can be strengthened. This verifies Conjecture \ref{conjid} for all ordered commutative quandles.

\begin{theorem}\label{Theorem 4.8}
    Let $\k$ be an integral domain with $\chara\k\neq 2$. Let $Q$ be an ordered commutative quandle, i.e., $x<y \Rightarrow x*z<y*z$ for all $x,y,z \in Q$. Then $\k[Q]$ does not contain nontrivial idempotents.
\end{theorem}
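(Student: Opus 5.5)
Write an arbitrary nonzero idempotent as $u=\sum_{i=1}^{n}\alpha_i e_i$ with all $\alpha_i\neq 0$ and distinct $e_i\in Q$. Using the order on $Q$, index the support so that $e_1<e_2<\dots<e_n$. The plan is to find a single basis element of $Q$ whose coefficient in $u^2$ is forced to be a nonzero multiple of $2\alpha_1\alpha_2$, while its coefficient in $u$ is $0$. This gives a contradiction as soon as $n\geq 2$.

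The first step is to record the order properties. The hypothesis says right multiplication $S_z$ is strictly increasing. By commutativity, left multiplication $L_z$ is strictly increasing as well. Hence if $x<y$, then taking $z=x$ in $x<y\Rightarrow x*z<y*z$ and using (Q1) gives $x=x*x<y*x$. Taking $z=y$ gives $x*y<y*y=y$. Thus
\[
x<y \ \Longrightarrow\ x< x*y < y .
\]
Also, since both multiplications are monotone, $i\le i'$ and $j\le j'$ imply $e_i*e_j\le e_{i'}*e_{j'}$, with strict inequality if either inequality is strict.

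The second step is to expand the square using commutativity:
\[
u^2=\sum_{i}\alpha_i^2 e_i+2\sum_{i<j}\alpha_i\alpha_j\, (e_i*e_j).
\]
Suppose $n\ge 2$. Among the products $e_i*e_j$ with $i<j$, monotonicity shows that $e_1*e_2$ is strictly smaller than every other such product. Indeed, $(i,j)\neq(1,2)$ with $i<j$ forces $i>1$ or $j>2$, so $e_i*e_j>e_1*e_2$. Moreover, $e_1<e_1*e_2<e_2$ by the first step. Since no $e_k$ lies strictly between $e_1$ and $e_2$, the element $e_1*e_2$ is not in the support of $u$ and does not coincide with any diagonal term $e_k$.

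The third step compares coefficients at $e_1*e_2$. Its coefficient in $u^2$ is exactly $2\alpha_1\alpha_2$, and its coefficient in $u$ is $0$. Since $\k$ is an integral domain with $\chara\k\neq 2$ and $\alpha_1,\alpha_2\neq0$, this is a contradiction. Hence $n\le 1$. If $n=1$, then $u=\alpha e_1$ and $\alpha^2=\alpha$ give $\alpha=1$, so $u$ is trivial. Together with the zero element, this gives the claim.

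The main point needing care is the uniqueness of the minimal product $e_1*e_2$. It relies on monotonicity in both arguments, which is exactly where commutativity combines with the one-sided order hypothesis. It also relies on the "betweenness" property, so that no diagonal term can cancel the coefficient $2\alpha_1\alpha_2$. Apart from this, the argument is just a comparison of coefficients.
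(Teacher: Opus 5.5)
Your proof is correct and is essentially the paper's argument. The paper works at the top of the support: it first gets $\alpha_n=1$ from the coefficient of $x_n$, then uses the product $x_{n-1}*x_n$, which satisfies $x_{n-1}<x_{n-1}*x_n<x_n$. You work symmetrically at the bottom with $e_1*e_2$, forcing $2\alpha_1\alpha_2=0$. Your version also makes explicit why that extremal off-diagonal product is attained only by the pair $(1,2)$, a point the paper merely asserts.
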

\begin{proof}
   Let $u=\sum_{i=1}^{n}\alpha_i x_i$ be an idempotent of $\k[Q]$ where $\alpha_i\in\k\setminus\{0\}$ and $x_1<\cdots<x_n$.
We have $$u^{2}=\left(\sum_{i=1}^{n}\alpha_i x_i\right)\left(\sum_{j=1}^{n}\alpha_j x_j\right)=\sum_{i=1}^{n}\sum_{j=1}^{n}\alpha_i\alpha_j (x_i\ast x_j)$$
  Since $Q$ is an ordered commutative quandle, $x_i \ast x_j < x_n \ast x_n = x_n$ for $i < n$ or $j < n$. Thus, the coefficient of $x_n$ in $u^2$ is $\alpha_n^2$. Therefore, the equality $u^2 = u$ implies that $\alpha_n^2 = \alpha_n$, i.e., $\alpha_n = 1$. 

  By strict ordering and commutativity, the elements immediately below $x_n$ in $u^2$ are $x_n \ast x_{n-1}$ and $x_{n-1} \ast x_n$. 
  Since $x_n \ast x_{n-1} = x_{n-1} \ast x_n$, the coefficient of $x_{n-1} \ast x_n$ in $u^2$ is $2\alpha_{n-1} \alpha_n$. 
  On the other hand, since each $x_i$ is idempotent, we deduce from orderability that
  \[
  x_{n-1}<x_{n-1}*x_n<x_n.
  \]
  It follows that $x_n\ast x_{n-1}\neq x_i$ for all $1\leq i\leq n$, so the coefficient of $x_{n-1}*x_n$ in $u=u^2$ is $0$. Hence,
  \[
  0=2\alpha_n \alpha_{n-1}=2\alpha_{n-1},
  \]
  so $\alpha_{n-1} = 0$. This contradicts our assumption that $\alpha_i\neq 0$ for each $i$.  
\end{proof}

\begin{corollary}
    Let $\k$ be an integral domain with $\chara\k\neq 2$, let $A$ be an ordered $\Z[1/2]$-algebra, and let $Q=\Alex(A,1/2)$ be the midpoint quandle over $A$. Then $\k[Q]$ contains no nontrivial idempotents.
\end{corollary}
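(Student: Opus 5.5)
The plan is to reduce the corollary to Theorem \ref{Theorem 4.8}. That requires showing two things about the midpoint quandle $Q=\Alex(A,1/2)$: it is commutative, and it is ordered in the sense used there, namely $x<y \Rightarrow x*z<y*z$ for all $x,y,z\in Q$. Here the order on $Q$ is the one given by the ordered $\Z[1/2]$-algebra structure on $A$. I read ``ordered'' as making $(A,+)$ a totally ordered abelian group compatible with multiplication by positive elements, in particular by $1/2$. Since we only need the order on the underlying $\Z[1/2]$-module, the multiplicative structure of $A$ plays no further role.

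\emph{Commutativity.} In $Q$ the operation is
\[
x*y=\tfrac12 x+\left(1-\tfrac12\right)y=\tfrac12(x+y),
\]
which is symmetric in $x$ and $y$. This is also recorded in the preliminaries, where every midpoint quandle is noted to be commutative. The quandle axioms hold because $Q$ is an Alexander quandle.

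\emph{Orderability.} Suppose $x<y$ in $A$ and $z\in A$. Translation invariance of the order gives $x+z<y+z$. Then we need that multiplying by $1/2$ preserves strict inequalities. Indeed, $1/2>0$ in an ordered $\Z[1/2]$-algebra, since $2\cdot\frac12=1>0$ and $2>0$. Alternatively, if $a>0$ but $\frac12 a\le 0$, then $a=\frac12 a+\frac12 a\le 0$, a contradiction. Hence
\[
x*z=\tfrac12(x+z)<\tfrac12(y+z)=y*z.
\]

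With both properties established, Theorem \ref{Theorem 4.8} applies with the given integral domain $\k$ of characteristic not $2$, so $\k[Q]$ has no nontrivial idempotents.

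The only delicate point is fixing what ``ordered $\Z[1/2]$-algebra'' means, so that the total order is translation invariant and $1/2$ acts order-preservingly. Once that convention is stated, the verification is immediate.
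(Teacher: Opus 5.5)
Your proposal is correct and matches the paper's approach: the paper states this corollary without a separate proof, as an immediate consequence of Theorem \ref{Theorem 4.8}, using that midpoint quandles are commutative and that the order on $A$ is compatible with $x\mapsto \frac12(x+z)$. Of your two justifications that halving preserves strict inequalities, the second one (via $a=\frac12 a+\frac12 a$) is the better one, since it uses only the totally ordered group structure of $(A,+)$ and needs neither a unit in $A$ nor positivity of products.
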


In \cite[Question 3.1]{BE}, it was asked whether for any unital ring $\Bbbk$ without zero divisors and infinite field $\mathbb F$, the quandle ring $\Bbbk[\operatorname{Core}(\mathbb F)]$ has only trivial idempotents. The following proposition proves that the question has a negative answer. Indeed, let $\k=\F\coloneq\mathbb{F}_2(X)$ be the field of rational functions over $\mathbb{F}_2$. Then $\F=(\mathbb{F}_2(X), +)$ is a group of exponent $2$, so the following proposition states that $\k[\Core(\F)]=\mathbb{F}_2(X)[\operatorname{Core}(\mathbb{F}_2(X))]$ has a nontrivial idempotent.
\begin{proposition}
   Let $\Bbbk$ be an integral domain, and let $G$ be a nontrivial group. Then:
   \begin{enumerate}
       \item $\Bbbk[\operatorname{Conj}_{m}(G)]$ has nontrivial idempotents.
        \item If $G$ has exponent $2$,  then $\Bbbk[\operatorname{Core}(G)]$ has nontrivial idempotents. 
   \end{enumerate}
   \end{proposition}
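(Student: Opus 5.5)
The plan is to reduce both parts to Proposition \ref{trivial-quandle-idempotents} by locating a trivial subquandle with at least two elements inside the quandle in question. The $\k$-span of such a subquandle is a subalgebra of $\k[Q]$, so any idempotent of that subalgebra is an idempotent of $\k[Q]$. Nontriviality can be read off from the support, since distinct elements of $Q$ are linearly independent in $\k[Q]$.

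\emph{Part (2).} If $G$ has exponent $2$, then every element is its own inverse, and so $G$ is abelian. Hence
\[
x*y = yx^{-1}y = yxy = xy^2 = x,
\]
and $\operatorname{Core}(G)$ is the trivial quandle on the set $G$. Proposition \ref{trivial-quandle-idempotents} then gives $I(\k[\operatorname{Core}(G)]) = x_0+\Delta_\k(\operatorname{Core}(G))$. Because $G$ is nontrivial, this set contains non-trivial elements. For instance, with $x_0\neq y$ and $\alpha\in\k$, take $u=x_0+\alpha(y-x_0)$. This is an idempotent, and it is not an element of $Q$ as soon as $\alpha\notin\{0,1\}$.

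\emph{Part (1).} I will not assume $G$ is abelian. Instead I take $1\neq g\in G$. Since $1$ and $g$ commute, we have $1*g=g^{-m}1g^m=1$ and $g*1=g$, so $\{1,g\}$ is a two-element trivial subquandle of $\operatorname{Conj}_m(G)$. Directly, for $u=1+\alpha(g-1)$ one checks $u^2=u$ using only these four products. Alternatively, one can apply Proposition \ref{trivial-quandle-idempotents} to $\k[\{1,g\}]\subseteq\k[\operatorname{Conj}_m(G)]$. For instance, $\alpha=-1$ gives the idempotent $2\cdot 1-g$. This is nontrivial whenever $2\neq 0$ in $\k$.

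The main obstacle is choosing a coefficient $\alpha\notin\{0,1\}$. Such an $\alpha$ exists unless $\k=\F_2$. In that case I will instead use three distinct pairwise commuting elements, for example $1,g,g^2$ when $\operatorname{ord}(g)\ge 3$, or $1,g,h$ with $h$ commuting with $g$. The element $1+g+h\in 1+\Delta_{\F_2}$ of their trivial subquandle then has length $3$ and is a nontrivial idempotent. The degenerate case $\k=\F_2$, $G=\Z_2$ has only the trivial idempotents $1$ and $g$, so it has to be excluded or noted separately. I would add a remark on this when writing the proof, since the statement as given does not rule it out.
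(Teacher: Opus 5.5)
Your proof is correct and follows the paper's route: the paper also notes that $\{e,x\}$ with $x\neq e$ is a trivial subquandle of $\operatorname{Conj}_m(G)$, and of $\operatorname{Core}(G)$ when $G$ has exponent $2$, and then simply cites \cite[Prop.\ 3.3]{MBMD}, whereas you write the idempotent $(1-\alpha)\cdot 1+\alpha g$ out explicitly via Proposition~\ref{trivial-quandle-idempotents}. Your caveat is also right: for $\k=\F_2$ and $G=\Z_2$ both quandles equal $\T_2$, and $\F_2[\T_2]$ has only the idempotents $0$, $1$, $g$, so the statement as written must exclude that case. Your element $1+g+h$ covers $\k=\F_2$ whenever $|G|\ge 3$, since then either some $g$ has order at least $3$ or $G$ has exponent $2$ and is abelian.
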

\begin{proof}
Let $e$ be the identity element of $G$. For $x \in G$ such that $x \neq e$, it is easy to see that $\{x, e\}$ is a trivial subquandle of $\operatorname{Conj}_m(G)$. Moreover, if $G$ has exponent $2$, then $\{x, e\}$ is also a trivial subquandle of $\operatorname{Core}(G)$. Hence, \cite[Prop.\ 3.3]{MBMD} yields the claim.
\end{proof}


\section{Automorphisms of quandle algebras}\label{sec-auto}

Given a quandle $Q$, denote by $\Aut (\k[Q])$ the group of $\k$-algebra automorphisms of $\k[Q]$. 
In this section, we compute $\Aut (\k[Q])$ for all trivial quandles and dihedral quandles of odd orders, with partial results for dihedral quandles of even orders. 

Evidently, $\Aut (Q) \leq \Aut (\k[Q])$. Furthermore, if $Q$ is a finite quandle with $n$ elements, then $\Aut (\k[Q]) \leq \mathrm{GL}_n(\k)$. 
Note that any $\phi \in \Aut (\k[Q])$ is defined by its action on elements of $Q$. If $Q = \{x_1, x_2, \ldots, x_n \}$, then each $\phi(x_i)$ is an idempotent of $\k[Q]$, and the quandle $\phi(Q)$ is isomorphic to $Q$. In \cite{BPS-1, MBMD}, these facts were used to prove the following.

\begin{theorem}[{\cite[Cor. 5.5]{MBMD}}] Let $n\geq 1$, let $\mathrm{FQ}_n$ be the free quandle of rank $n$, and let $\mathrm{WB}_n$ be the welded braid group on $n$ strands. Then 
  $$\Aut( \mathbb{Z}[\mathrm{FQ}_n]) \cong \Aut(\mathrm{FQ}_n)\cong \mathrm{WB}_n.$$
\end{theorem}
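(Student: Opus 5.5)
The plan is to realize both isomorphisms through the natural restriction maps
\[
\Aut(\mathrm{FQ}_n)\longrightarrow \Aut(\mathbb{Z}[\mathrm{FQ}_n]),\qquad \mathrm{WB}_n\longrightarrow \Aut(\mathrm{FQ}_n),
\]
and to show that each is bijective. The second isomorphism, $\Aut(\mathrm{FQ}_n)\cong \mathrm{WB}_n$, is classical. Automorphisms of the free quandle on $x_1,\dots,x_n$ correspond to automorphisms of the free group that send each generator to a conjugate of a generator, that is, basis-conjugating permutation automorphisms. This group is known to be the welded braid group. I would quote this, in the form used in \cite{BPS-1, MBMD}, rather than reprove it. So the real content is the first isomorphism, and it amounts to showing that every $\k$-algebra automorphism of $\mathbb{Z}[\mathrm{FQ}_n]$ permutes the basis $\mathrm{FQ}_n$.

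Injectivity of $\Aut(\mathrm{FQ}_n)\to\Aut(\mathbb{Z}[\mathrm{FQ}_n])$ is immediate, since a quandle automorphism is determined by its linear extension. For surjectivity, take $\phi\in\Aut(\mathbb{Z}[\mathrm{FQ}_n])$.

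\emph{Step 1.} Each $\phi(x_i)$ is an idempotent, so I would first show that the only idempotents of $\mathbb{Z}[\mathrm{FQ}_n]$ are $0$ and the elements of $\mathrm{FQ}_n$. Write $u=\sum\alpha_j y_j$ with $u^2=u$. I would use a length (word-length) filtration on $\mathrm{FQ}_n$, where elements are represented by reduced conjugates $w^{-1}x_iw$ in the free group. A product $y*z$ with $y\neq z$ has strictly larger conjugator length than $y$ unless cancellation occurs. Choose a term $y_j$ of maximal complexity. Comparing the top terms of $u^2$, I would hope to show as in the proof of Theorem~\ref{Theorem 4.8} that only one term survives and that it has coefficient $1$. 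Alternatively, \cite{MBMD} already contains this statement for free quandles, and I would cite it.

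\emph{Step 2.} Knowing that $\phi(x_i)\in\mathrm{FQ}_n$ for all $i$, the restriction of $\phi$ to $\mathrm{FQ}_n$ is a quandle endomorphism, because $\phi$ is multiplicative. Since $\phi^{-1}$ has the same property, the restriction is an automorphism. Its linearization agrees with $\phi$ on the basis, so the two coincide, which proves surjectivity.

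The main obstacle is Step 1, the idempotent classification. The top-term argument needs care for two reasons. First, $y*z$ can equal $z*y$ or collapse when $y,z$ lie in a common orbit. Second, the free quandle is not commutative, so the clean ``unique maximum'' trick of Theorem~\ref{Theorem 4.8} must be replaced by a choice of a term with maximal conjugator length together with a lexicographic tie-break, checking that its square-coefficient cannot be cancelled.
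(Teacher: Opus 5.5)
Your plan is correct and is essentially the route the paper takes by citation: an automorphism sends each element of $\mathrm{FQ}_n$ to a nonzero idempotent, the classification of idempotents of $\mathbb{Z}[\mathrm{FQ}_n]$ from \cite{MBMD} forces it to permute $\mathrm{FQ}_n$, and $\Aut(\mathrm{FQ}_n)\cong\mathrm{WB}_n$ is the standard identification with the permutation-conjugacy automorphisms of the free group. Your top-term argument in Step~1 is only a sketch, so, as in the paper, the real content rests on the cited idempotent result; in Step~2 you should also apply that result to $\phi(y)$ for every $y\in\mathrm{FQ}_n$, not only to the generators.
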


\begin{theorem}[{\cite[Thm. 6.1, Prop. 6.3, Thm. 6.4]{BPS-1}}]\label{thm:t2}\phantom{-}
\begin{enumerate}
    \item $\Aut (\mathbb{Z}[\T_2]) \cong \mathbb{Z} \rtimes \mathbb{Z}_2$.
    \item $\Aut (\mathbb{Z}[\R_3]) \cong \Sigma_3 \cong \mathrm{\Aut }(\R_3)$.
    \item $\Aut (\mathbb{Z}[\R_4]) \cong ( \mathbb{Z}_2 \times \mathbb{Z}_2) \rtimes \mathbb{Z}_2$.
\end{enumerate}
\end{theorem}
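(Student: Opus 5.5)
This theorem is cited from \cite{BPS-1}, so I would reprove each part by one scheme. An automorphism $\phi$ of $\k[Q]$, with $\k=\Z$, is determined by the images $\phi(x)$ for $x\in Q$. These images are idempotents of $\Z[Q]$, they form a $\Z$-basis, and they satisfy $\phi(x)\phi(y)=\phi(xy)$. Since $\epsilon\circ\phi$ is a ring homomorphism $\Z[Q]\to\Z$, and every such homomorphism sends each element of $Q$ to an idempotent of $\Z$, the augmentation values satisfy $\epsilon(\phi(x))\in\{0,1\}$.

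\textbf{Part (1).} By Proposition \ref{trivial-quandle-idempotents}, the idempotents of $\Z[\T_2]$ with $\T_2=\{x,y\}$ are exactly the elements $x+k(x-y)$ with $k\in\Z$; these all have augmentation $1$. Since $\Delta^2=0$ (Theorem \ref{deltasqzero}), any pair $\phi(x)=(1+a)x-ay$, $\phi(y)=(1+b)x-by$ satisfies the multiplication rules of $\T_2$. The map is bijective on $\Z^2$ exactly when $\det=\pm1$, and the determinant works out to $a-b$, up to sign. So $\Aut(\Z[\T_2])\cong\{(a,b):a-b=\pm1\}$. I would then identify this group as follows. The translations $b=a-1$ form an infinite cyclic normal subgroup, namely the maps fixing $x-y$ up to sign. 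The swap $x\leftrightarrow y$ gives the complementary $\Z_2$, and it acts by inversion. This yields $\Z\rtimes\Z_2$.

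\textbf{Part (2).} By Theorem \ref{Theorem 4.6}, with $3\notin\Z^\times$, every idempotent of $\Z[\R_3]$ of length at most $3$ is trivial. Since $\R_3$ has only three elements, this gives $I(\Z[\R_3])=\R_3\cup\{0\}$. A basis cannot contain $0$, so $\phi$ permutes $\R_3$ and is therefore induced by an element of $\Aut(\R_3)\cong\Sigma_3$.

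\textbf{Part (3).} This is the main obstacle, since $\R_4$ is neither Latin nor commutative. Write $\R_4=\{0,1,2,3\}$ with $x*y=2y-x$. It splits into the two trivial subquandles $\{0,2\}$ and $\{1,3\}$, each of which acts on the other by transposition. My plan has three steps.
\begin{enumerate}
\item Compute the idempotents with augmentation $1$ by solving the quadratic system for $u=\sum\alpha_i i$, as in \cite{BPS-1}.
\item Use the relations $\phi(0)\phi(2)=\phi(0)$ and $\phi(0)\phi(1)=\phi(2)$ to show that $\phi$ preserves the splitting of $\Z[\R_4]$ into $\Z[\{0,2\}]\oplus\Z[\{1,3\}]$, possibly after swapping the two orbits. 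The key identity is $\phi(2)=\phi(0)\phi(1)$, which forces $\phi(0)-\phi(2)$ to be an eigenvector of right multiplication by $\phi(1)$ with eigenvalue $-1$.
\item Show that integrality and invertibility then force the translation parameters $k$ from Proposition \ref{trivial-quandle-idempotents} to vanish. This leaves the permutations $0\leftrightarrow2$ and $1\leftrightarrow3$ together with the orbit swap, which gives $(\Z_2\times\Z_2)\rtimes\Z_2$, the dihedral group of order $8$, equal to $\Aut(\R_4)$.
\end{enumerate}
The delicate point is excluding nonpermutation automorphisms, which exist for $\T_2$ and hence are not ruled out by the trivial-subquandle structure alone. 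I expect the $-1$-eigenvector constraint to kill them: it should force $(1+2k)$-type coefficients to equal $\pm1$ consistently with the analogous constraint on the other orbit.
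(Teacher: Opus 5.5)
Your parts (1) and (2) are sound. Part (1) is the same argument the paper uses to recover this case from Theorem~\ref{thm:aff}: columns summing to $1$, $\Delta^2=0$, and determinant $a-b=\pm1$. Part (2) correctly reads off $I(\Z[\R_3])=\R_3\cup\{0\}$ from Theorem~\ref{Theorem 4.6}. The paper itself only cites \cite{BPS-1} for all three parts.

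The gap is part (3). It stops at ``I expect'', and the mechanism you expect to do the work cannot do it. Write $\R_4=\{a_0,\dots,a_3\}$, $b_0=a_0-a_2$ and $b_1=a_1-a_3$. These span the right annihilator. Right multiplication by $a_1$ sends $b_0\mapsto -b_0$ and $b_1\mapsto b_1$; right multiplication by $a_0$ sends $b_0\mapsto b_0$ and $b_1\mapsto -b_1$. Fix $k,l\in\Z$ and consider the $\Z$-linear map
$a_0\mapsto a_0+kb_0$, $a_2\mapsto a_2-kb_0$, $a_1\mapsto a_1+lb_1$, $a_3\mapsto a_3-lb_1$.
It fixes $a_0+a_2$ and $a_1+a_3$ and rescales $b_0,b_1$. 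Hence it commutes with every $S_{a_j}$ and preserves the coefficient sums over the two orbits, which is all that right multiplication sees. So it is an algebra endomorphism of $\Z[\R_4]$ respecting every relation of $\R_4$. In particular, $\phi(b_0)=(1+2k)b_0$ is automatically a $(-1)$-eigenvector of right multiplication by $\phi(a_1)$, so your eigenvector identity puts no constraint on $k$. For $(k,l)=(1,0)$ this map has determinant $3$, and over $\Z[1/3]$ it is a genuine non-permutation automorphism. So no argument built only on the multiplicative relations can work. What must be used is unimodularity over $\Z$, which your step 3 names but never turns into an argument.

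Here is how to close the gap.

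First, show $\epsilon\circ\phi=\epsilon$. Apply the multiplicative map $\epsilon\circ\phi$ to $a_0a_2=a_0$, $a_2a_0=a_2$, $a_0a_1=a_2$, $a_1a_0=a_3$ and the analogous relations for $a_1,a_3$. This forces the four values $\epsilon(\phi(a_i))\in\{0,1\}$ to coincide, and they cannot all be $0$ because $\phi$ is onto.

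Second, actually carry out your step 1. For $u=\alpha a_0+\beta a_1+\gamma b_0+\delta b_1$ with $\alpha+\beta=1$, one has $u^2=\alpha S_{a_0}(u)+\beta S_{a_1}(u)$. Then $u^2=u$ if and only if $\beta(2\gamma+\alpha)=0$ and $\alpha(2\delta+\beta)=0$. If $\alpha\beta\neq0$, then $\alpha$ and $\beta$ are both even, contradicting $\alpha+\beta=1$. So the augmentation-one idempotents are exactly $a_0+kb_0$ and $a_1+kb_1$ with $k\in\Z$.

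Third, place $\phi$ in the family above. Since $\Z[\{a_0,a_2\}]$ and $\Z[\{a_1,a_3\}]$ are subalgebras, surjectivity puts $\phi(a_0)$ and $\phi(a_1)$ in different ones. After composing with the orbit swap $a_i\mapsto a_{i+1}$, write $\phi(a_0)=a_0+kb_0$ and $\phi(a_1)=a_1+lb_1$. Because $b_0,b_1$ are right annihilators, $\phi(a_2)=\phi(a_0)\phi(a_1)=(a_0+kb_0)a_1=a_2-kb_0$, and likewise $\phi(a_3)=a_3-lb_1$.

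Finally, in the basis $a_0,a_1,b_0,b_1$ the matrix of $\phi$ is block triangular with determinant $(1+2k)(1+2l)$. This must be $\pm1$, so $k,l\in\{0,-1\}$. The parameters need not vanish: $k=-1$ is the transposition $a_0\leftrightarrow a_2$. Hence every automorphism permutes $\R_4$, and $\Aut(\Z[\R_4])=\Aut(\R_4)\cong(\Z_2\times\Z_2)\rtimes\Z_2$.
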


In this section, we consider \cite[Prob.\ 6.5]{BPS-1} for finite quandles.

\begin{problem}[{\cite[Prob.\ 6.5]{BPS-1}}]\label{prob:aut}
Compute the automorphism groups of integral quandle algebras of all finite trivial quandles and dihedral quandles.
\end{problem}

\subsection{Trivial quandles}
Let $\k$ be a unital associative commutative ring. Recall that, for each $n\geq 0$, the \emph{general affine group} $\Aff_n(\k)$ is defined to be the obvious semidirect product $\k^n\rtimes\GL_n(\k)$. Equivalently, $\Aff_n(\k)$ is the group of invertible affine transformations of the affine space $\mathbb A^n_\k=\k^n$. Some connections between $\Aff_n(\k)$ and rack theory can be found in \cite{LT2}.

\begin{theorem}\label{thm:aff} Let $\k$ be a unital associative commutative ring, and let $\T_n=\{x_1,\dots x_n\}$ be the trivial quandle of order $n\geq 1$. Then
     $$\Aut(\k[\T_{n}])\cong\Aff_{n-1}(\k).$$
\end{theorem}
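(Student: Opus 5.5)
In $\k[\T_n]$ the multiplication is $uv=\epsilon(v)u$, because $x_ix_j=x_i$ and the product is bilinear. The plan is to classify the $\k$-linear automorphisms $\phi$ that respect this product and then match them with affine maps.

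\textbf{Step 1: $\phi$ preserves augmentation.} Since $\phi(uv)=\phi(u)\phi(v)$, we get
\[
\epsilon(v)\phi(u)=\epsilon(\phi(v))\phi(u)
\]
for all $u,v$. Take $u=\phi^{-1}(x_1)$, so that $\phi(u)=x_1$, a basis element. Comparing coefficients of $x_1$ gives $\epsilon(v)=\epsilon(\phi(v))$. Hence $\epsilon\circ\phi=\epsilon$.

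\textbf{Step 2: the converse.} Let $\phi$ be any $\k$-linear bijection with $\epsilon\circ\phi=\epsilon$. Then
\[
\phi(u)\phi(v)=\epsilon(\phi(v))\phi(u)=\epsilon(v)\phi(u)=\phi(uv),
\]
so $\phi$ is an algebra automorphism. Therefore $\Aut(\k[\T_n])$ is exactly the group of $\k$-module automorphisms of $\k^n$ that fix the functional $\epsilon$.

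\textbf{Step 3: identify this group with $\Aff_{n-1}(\k)$.} Any such $\phi$ preserves the augmentation ideal $\Delta=\Delta_\k(\T_n)$, which is free with basis $d_i=x_i-x_1$ for $2\le i\le n$. It also preserves the affine hyperplane $H=\epsilon^{-1}(1)=x_1+\Delta$. By Proposition \ref{trivial-quandle-idempotents}, $H$ is precisely $I(\k[\T_n])$.

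Identify $H$ with $\k^{n-1}$ via $x_1+\sum c_id_i\mapsto(c_i)$. Then $\phi|_H$ is the affine map $h\mapsto \phi(x_1)+\phi|_\Delta(h-x_1)$. Its translation part is $\phi(x_1)-x_1\in\Delta$, and its linear part is $A=\phi|_\Delta$.

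Define $\Phi(\phi)=(\phi(x_1)-x_1,A)\in \k^{n-1}\rtimes\GL_{n-1}(\k)$. We need $A$ invertible on $\Delta$. This holds because $\phi^{-1}$ also preserves $\epsilon$, hence preserves $\Delta$, and restricts to an inverse of $A$.

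\textbf{Step 4: $\Phi$ is a bijective homomorphism.}
\begin{itemize}
\item Homomorphism: restriction to $H$ is compatible with composition, so $\Phi$ respects the group law.
\item Injective: $\k[\T_n]=\k x_1\oplus\Delta$, and $\phi$ is determined by $\phi(x_1)$ together with $\phi|_\Delta$.
\item Surjective: given a translation $b\in\Delta$ and $A\in\GL(\Delta)$, set $\phi(x_1)=x_1+b$ and $\phi|_\Delta=A$. In the basis $x_1,d_2,\dots,d_n$ this $\phi$ has block matrix $\begin{pmatrix}1&0\\ b&A\end{pmatrix}$. Its determinant is $\det A$, a unit, so $\phi$ is invertible. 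It satisfies $\epsilon\circ\phi=\epsilon$, so by Step 2 it is an automorphism.
\end{itemize}

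The main obstacle is Step 1: it requires extracting $\epsilon\circ\phi=\epsilon$ without dividing by anything, since $\k$ is an arbitrary ring. Choosing $u$ with $\phi(u)$ equal to a basis element handles this. The remaining steps are routine bookkeeping, and for $n=1$ both groups are trivial.
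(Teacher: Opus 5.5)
Your proof is correct and has the same skeleton as the paper's. Automorphisms are exactly the $\k$-linear bijections with $\epsilon\circ\phi=\epsilon$ (invertible matrices with column sums $1$), and this group is $\Aff_{n-1}(\k)$. Your Step~2 is exactly the paper's converse computation.

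The difference is that the paper imports both substantive ingredients. It cites \cite[Sec.\ 6]{BPS-1} for $\epsilon(\phi(x_i))=1$, and Poole's Theorem~B \cite{poole} for identifying column-stochastic invertible matrices with $\Aff_{n-1}(\k)$. You prove both.

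Your Step~1 trick needs only surjectivity of $\phi$ and freeness of the basis: you evaluate $\epsilon(v)\phi(u)=\epsilon(\phi(v))\phi(u)$ at $u=\phi^{-1}(x_1)$. So it holds over every commutative ring. It is sturdier than reasoning through idempotents, which is what the paper indicates underlies the cited result. Indeed, $\phi(x_i)^2=\phi(x_i)$ alone only forces $\epsilon(\phi(x_i))$ to be an idempotent of $\k$.

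Your Steps~3--4 make Poole's isomorphism explicit as $\phi\mapsto(\phi(x_1)-x_1,\phi|_{\Delta})$, via the splitting $\k[\T_n]=\k x_1\oplus\Delta$. The paper's version is shorter and presents $\Aut(\k[\T_n])$ directly as a subgroup of $\GL_n(\k)$.

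You should drop the aside in Step~3 that $H=I(\k[\T_n])$. It is unused, and it is false for general $\k$. Both $0$ (when $\k\neq0$) and $ex_1$, for a nontrivial idempotent $e\in\k$, are idempotents lying outside $H$.
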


\begin{proof}
It is shown in \cite[Sec.\ 6]{BPS-1} that for every automorphism $\phi \in \Aut (\k[\T_n])$, there exist constants $(\alpha_{j,i})_{1\leq j\leq n-1,1\leq i\leq n}$ in $\k$ such that
\[
\phi(x_i)=\left(1 -\sum^{n-1}_{j=1}\alpha_{j,i}  \right) x_1+ \sum^{n-1}_{j=1} \alpha_{j,i} x_{j+1}
\]
for all $1\leq i\leq n$. 
Since $\k[\T_n]\cong\k^n$ as $\k$-modules, $\phi$ can be written uniquely as a matrix of the form
\[
\phi =
   \begin{pmatrix}
1 -\sum^{n-1}_{j=1}\alpha_{j,1} & 1 -\sum^{n-1}_{j=1}\alpha_{j,2} & \cdots & 1 -\sum^{n-1}_{j=1}\alpha_{j,n} \\
\alpha_{1,1} & \alpha_{1,2} & \cdots & \alpha_{1,n} \\
\vdots & \vdots & \ddots & \vdots \\
\alpha_{n-1,1} & \alpha_{n-1,2} & \cdots & \alpha_{n-1,n} 
\end{pmatrix}  \in\GL_n(\k).
\]
Conversely, every matrix $\phi$ of this form is a $\k$-algebra automorphism of $\Aut(\k[\T_n])$. Indeed, for all $a,b\in \k[\T_n]$, we have $ab=\epsilon(b)a$ and $\epsilon(\phi(b))=\epsilon(b)$, so 
\[
\phi(ab)=\phi(\epsilon(b)a)=\epsilon(b)\phi(a)=\epsilon(\phi(b))\phi(a)=\phi(a)\phi(b).
\]
Hence, an invertible matrix lies in $\Aut(\k[\T_n])\leq\GL_n(\k)$ if and only if each of its columns sums to $1$. This is precisely the description of one of the canonical embeddings $\Aff_{n-1}(\k)\hookrightarrow\GL_n(\k)$; see, for example, \cite[Thm.\ B]{poole}.\footnote{Although the paper \cite{poole} works over a field, its proof of Theorem B works just as well over any unital associative commutative ring.}
\end{proof}

Of course, the $n=1$ case of Theorem \ref{thm:aff} recovers the obvious fact that $\Aut(\k[\T_1])=1$, while the $n=2$ case recovers the first part of Theorem \ref{thm:t2}.

\subsection{Dihedral quandles} First, we study $\Z[\R_n]$ when $n$ is odd.

\begin{lemma}\label{lem:odd}
  For all odd $n\geq 1$, the dihedral quandle $\R_n$ is Latin.  
\end{lemma}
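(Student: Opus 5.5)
The plan is to identify $\R_n$ with $\Z_n$, where the operation is $x*y = 2y - x \pmod n$. Then, for each fixed $x\in\R_n$, we show directly that the left-multiplication map $L_x\colon \R_n\to\R_n$ is a bijection. Since $\R_n$ is finite, bijectivity is equivalent to injectivity, and injectivity is the easier property to check.

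Fix $x\in\Z_n$ and suppose $L_x(y)=L_x(z)$, i.e., $2y-x\equiv 2z-x \pmod n$. Then $2(y-z)\equiv 0\pmod n$, so $n$ divides $2(y-z)$. Because $n$ is odd, $\gcd(2,n)=1$, and hence $n$ divides $y-z$. Thus $y=z$ in $\Z_n$, so $L_x$ is injective and therefore bijective. Equivalently, $2$ is a unit in $\Z_n$ with inverse $(n+1)/2$, and the explicit inverse of $L_x$ is $z\mapsto \tfrac{n+1}{2}(z+x)$. Giving this formula makes the argument fully constructive.

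This holds for every $x$, so $\R_n$ is Latin by the definition recalled in Section~\ref{sec-prelim}. The degenerate case $n=1$ is immediate, since the only map on a one-point set is the identity.

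There is no real obstacle here. The only point needing care is the convention $x*y=yx^{-1}y$ from the definition of $\Core(G)$, written additively as $2y-x$. Under this convention it is $L_x$, not $S_x$, whose bijectivity uses the oddness of $n$. By contrast, $S_y$ is the reflection $x\mapsto 2y-x$, which is always a bijection.
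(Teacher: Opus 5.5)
Your proof is correct and is essentially the paper's argument. Both rest on $2$ being invertible modulo odd $n$: the paper solves $x*y=2y-x\equiv z \pmod n$ directly to get $y\equiv 2^{-1}(x+z)$, which is exactly your explicit inverse $z\mapsto \tfrac{n+1}{2}(z+x)$, and your injectivity-plus-finiteness step is just a repackaging of the same idea.
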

\begin{proof}
    Let $S_a$ denote the left multiplication by an element $a$ of $\R_n$. For $b \in \R_n$, the equality $L_a(x)=a*x = b$ is equivalent to $2x - a \equiv b \pmod{n}$. Since $n$ is odd, $2$ is invertible modulo $n$. Therefore, $x \equiv 2^{-1}(a + b) \pmod{n}$. Hence, $L_a$ is bijective.
\end{proof}
  Therefore, the following proposition holds.
 \begin{proposition}\label{prop:odd-dihedral}
     Let $n\geq 1$ be odd, and let $\R_n$ be the dihedral quandle with underlying set $\Z_n$. Then there are canonical isomorphisms
     \[\Aut (\mathbb{Z}[\R_n])\cong \mathrm{\Aut }(\R_n)\cong\Aff_1(\Z_n).\]
 \end{proposition}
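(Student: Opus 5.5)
Because of the phrase ``Therefore, the following proposition holds'', the plan is to derive the result from Lemma \ref{lem:odd} together with an idempotent classification. The argument has three steps.

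First, $\Aut(\R_n)\cong\Aff_1(\Z_n)$. Every map $x\mapsto ax+b$ with $a\in\Z_n^\times$ satisfies
\[
a(2y-x)+b=2(ay+b)-(ax+b),
\]
so it is a quandle automorphism. Conversely, let $f$ be an automorphism and set $b=f(0)$, $a=f(1)-f(0)$. The quandle $\R_n$ is generated by $0$ and $1$, since $1*0=-1$ and $k*(k+1)=k+2$ give all integers by induction. Hence $f$ agrees with $x\mapsto ax+b$ everywhere. Bijectivity of $f$ forces $a$ to be a unit.

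Second, every $\phi\in\Aut(\Z[\R_n])$ maps $\R_n$ into $\R_n$. Each $\phi(x)$ is an idempotent, and $\epsilon(\phi(x))=1$, because $\epsilon\circ\phi$ is a ring homomorphism $\Z[\R_n]\to\Z$. The key input is that the only idempotents of $\Z[\R_n]$ with augmentation $1$ are the elements of $\R_n$. I would prove this by reducing modulo a prime. For $p\nmid 2n$ one could use semisimplicity, but a cleaner route is a norm or trace argument.

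1. Define $\tau(u)=\sum_x\alpha_x^2$, and compare the coefficients of $u^2=u$ with $u=\sum_x\alpha_x x$.
2. Multiply by $e$ using Proposition \ref{prop:unital}. Since $\R_n$ is Latin, $eu=ue=\epsilon(u)e$, which holds automatically and gives nothing.
3. Instead, use the Latin property to get, for each fixed $z$, a bijection $y\mapsto$ the unique $x$ with $x*y=z$. The coefficient of $z$ in $u^2$ is then $\sum_y \alpha_{2y-z}\alpha_y$.
4. Pass to the Fourier transform over $\Z_n$, working in $\mathbb{C}$. Writing $\hat\alpha(\chi)=\sum_x\alpha_x\chi(x)$, the product becomes $\widehat{u^2}(\chi)=\hat\alpha(\chi^2)\hat\alpha(\chi^{-1})$.
5. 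The condition is therefore $\hat\alpha(\chi)=\hat\alpha(\chi^2)\overline{\hat\alpha(\chi)}$, using that $\alpha$ is real. Taking absolute values gives $|\hat\alpha(\chi)|\in\{0,|\hat\alpha(\chi^2)|\}$.
6. Since squaring permutes the characters ($n$ odd), follow each orbit of $\chi\mapsto\chi^2$. Either $\hat\alpha$ vanishes on the whole orbit, or $|\hat\alpha|$ is constant on it with value $|\hat\alpha(\chi^{2^k})|$. Together with $\hat\alpha(1)=1$, this gives $\hat\alpha(\chi)\in\{0\}\cup\{\text{modulus }1\}$.
7. Parseval then gives $n\sum\alpha_x^2=\sum_\chi|\hat\alpha(\chi)|^2\le n$. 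Since the $\alpha_x$ are integers with $\sum_x\alpha_x=1$ and $\sum_x\alpha_x^2\le1$, $u$ is a single basis element.

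Third, injectivity of $\phi$ on $\R_n$ then makes $\phi|_{\R_n}$ a quandle automorphism. The restriction map $\Aut(\Z[\R_n])\to\Aut(\R_n)$ is inverse to the evident inclusion, which gives both canonical isomorphisms.

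The main obstacle is the idempotent classification in the second step, specifically the phase bookkeeping on the squaring orbits in item 6. I would try the Fourier/Parseval approach first, since it avoids case analysis.
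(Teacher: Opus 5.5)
Your argument is correct in substance, after two small repairs noted below, and it follows a genuinely different, self-contained route from the paper's. The paper's proof is purely a chain of citations. It takes the classification of idempotents of $\Z[\R_n]$ for odd $n$ from forthcoming work \cite{ETV}. It then invokes \cite[Prop.\ 2.4]{BE} (trivial idempotents imply $\Aut(\Z[Q])\cong\Aut(Q)$) and \cite[Thm.\ 2.1]{Elhamdadi2012} ($\Aut(\R_n)\cong\Aff_1(\Z_n)$).

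You reprove all three ingredients:
\begin{enumerate}
\item the generation of $\R_n$ by $0,1$, which gives the affine description;
\item the restriction argument, which gives the first isomorphism;
\item most substantially, the idempotent classification, via the discrete Fourier transform on $\Z_n$.
\end{enumerate}
The last step is the real gain. The identity $\widehat{u^2}(\chi)=\hat\alpha(\chi^2)\overline{\hat\alpha(\chi)}$ together with Parseval gives $\sum_x\alpha_x^2\le 1$ for every idempotent. So you in fact obtain $I(\Z[\R_n])=\R_n\cup\{0\}$, which is exactly the input the paper defers to unpublished work. The price is that the method is tailored to the affine operation $x*y=2y-x$ over a cyclic group, where the reflection turns one factor into a complex conjugate. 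It also uses integrality only at the very end. So it does not obviously extend to other Latin quandles, whereas the paper's citation chain is modular.

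\textbf{First repair.} That $\epsilon\circ\phi$ is a ring homomorphism only shows $\epsilon(\phi(x))\in\{0,1\}$. To exclude $0$, use either of the following:
\begin{itemize}
\item Your Parseval bound applies to idempotents of any augmentation. Hence an augmentation-zero idempotent with integer coefficients is $0$, while $\phi(x)\neq 0$.
\item The map $x\mapsto\epsilon(\phi(x))$ is multiplicative on the Latin quandle $\R_n$. A single zero value therefore forces $\epsilon\circ\phi=0$, which contradicts surjectivity.
\end{itemize}

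\textbf{Second repair.} In items 5--6, taking absolute values gives $|\hat\alpha(\chi)|\bigl(1-|\hat\alpha(\chi^2)|\bigr)=0$. That is, $\hat\alpha(\chi)\neq 0$ implies $|\hat\alpha(\chi^2)|=1$; it does not give $|\hat\alpha(\chi)|\in\{0,|\hat\alpha(\chi^2)|\}$. Also, $\hat\alpha(1)=1$ cannot propagate to other orbits, because the trivial character is fixed by squaring. The correct implication, iterated around the finite squaring cycle through $\chi$, already shows $|\hat\alpha|\equiv 1$ on every orbit where $\hat\alpha$ is not identically zero. That is all item 7 needs.
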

 \begin{proof}
     An upcoming article of Elhamdadi, Virgin, and the second author \cite{ETV} verifies Conjecture \ref{conjid} for Latin dihedral quandles. Therefore, the first and second isomorphisms are \cite[Prop.\ 2.4]{BE} and \cite[Thm.\ 2.1]{Elhamdadi2012}, respectively.
 \end{proof}

 Here is another (unrelated) consequence. Write $\R_n=\{a_0,\dots,a_{n-1}\}$. 
Given a nonassociative ring $A$, recall that the \emph{left nucleus} of $A$ is defined to be the associative subring
\[
\Nuc_l(A)\coloneq\{a\in A\mid a(xy)=(ax)y\text{ for all }x,y\in A\}.
\]

\begin{proposition}
    Let $\k$ be a unital commutative ring, let $n\geq 1$ be an odd integer, and let $e\coloneq\sum_{i\in\Z_n}a_i$ be the element of $\Z[\R_n]$ constructed in Proposition \ref{prop:unital} (cf.\ Lemma \ref{lem:odd}). Then the left nucleus of $\k[\R_n]$ is $\Nuc_l(\k[\R_n])=\k e$.
\end{proposition}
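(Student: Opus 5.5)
The plan is to prove the two inclusions $\k e\subseteq\Nuc_l(\k[\R_n])$ and $\Nuc_l(\k[\R_n])\subseteq\k e$ separately. Throughout, write $a_i* a_j=a_{2j-i}$ with indices in $\Z_n$.

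\emph{Step 1: $\k e\subseteq\Nuc_l(\k[\R_n])$.} By Lemma \ref{lem:odd}, $\R_n$ is Latin, so Proposition \ref{prop:unital} applies and gives $ex=\epsilon(x)e$ and $xe=\epsilon(x)e$ for all $x\in\k[\R_n]$. Since $\epsilon$ is an algebra homomorphism, for all $x,y$ we get
\[
e(xy)=\epsilon(xy)e=\epsilon(x)\epsilon(y)e,
\qquad
(ex)y=\epsilon(x)\,ey=\epsilon(x)\epsilon(y)e .
\]
Hence $e\in\Nuc_l$. Since the associator $(a,x,y)\mapsto a(xy)-(ax)y$ is $\k$-trilinear, all of $\k e$ then lies in $\Nuc_l$.

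\emph{Step 2: $\Nuc_l(\k[\R_n])\subseteq\k e$.} Let $a=\sum_i c_ia_i$ be in the left nucleus. By trilinearity of the associator, it is equivalent to require $a(a_ja_k)=(aa_j)a_k$ for all $j,k\in\Z_n$. Computing both sides gives
\[
a(a_ja_k)=\sum_i c_i\,a_{2(2k-j)-i},
\qquad
(aa_j)a_k=\sum_i c_i\,a_{2k-2j+i}.
\]
Comparing the coefficients of $a_m$ yields
\[
c_{4k-2j-m}=c_{m+2j-2k}\qquad\text{for all } j,k,m\in\Z_n .
\]
Put $t=m+2j-2k$. Then $4k-2j-m=2k-t$, and since $m$ is free, $t$ ranges over all of $\Z_n$. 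The condition therefore becomes $c_{2k-t}=c_t$ for all $k,t$.

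\emph{Step 3: conclusion.} Because $n$ is odd, $2$ is invertible in $\Z_n$, as in the proof of Lemma \ref{lem:odd}. Given any $s,t\in\Z_n$, choose $k$ with $2k=s+t$; then $c_s=c_t$. So all the coefficients $c_i$ are equal to a common value $c$, and $a=ce\in\k e$.

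The only step needing care is the index bookkeeping in Step 2, namely checking that the substitution $t=m+2j-2k$ is surjective onto $\Z_n$ so that no constraint is lost. Step 3 is exactly where oddness of $n$ is used, through the invertibility of $2$. (The statement's ``$\Z[\R_n]$'' in the definition of $e$ should be read as $\k[\R_n]$.)
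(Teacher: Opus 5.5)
Your proof is correct and follows essentially the same route as the paper: compare coefficients in the left-associator identity on basis elements to get the reflection invariance $c_t=c_{2k-t}$, then use that $2$ is invertible in $\Z_n$. The paper only uses the case $j=k$, where $(aa_j)a_j=a$ and $a_ja_j=a_j$ give $a=aa_j$ directly; your general $(j,k)$ computation reduces to the same condition, and your Step 1 supplies a justification for the inclusion $\k e\subseteq\Nuc_l(\k[\R_n])$, which the paper simply calls evident.
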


\begin{proof}
    Certainly, $\k e$ is contained in the $\Nuc_l(\k[\R_n])$. Conversely, let $a\in\Nuc_l(\k[\R_n])$, and write $x=\sum_{i\in \Z_n}\alpha_ia_i$. We have to show that $\alpha_i=\alpha_j$ for all $i,j\in\Z_n$. Indeed, for all $j\in\Z_n$, 
    \[
    \sum_{i\in \Z_n}\alpha_ia_i=\sum_{i\in \Z_n}\alpha_ia_{2j-(2j-i)}=(xa_j)a_j=x(a_ja_j)=xa_j=\sum_{i\in \Z_n}\alpha_ia_{2j-i}.
    \]
    Since $n$ is odd, multiplication by $2$ is invertible in $\Z_n$, so the claim follows from the fact that the $a_i$'s are linearly independent in $\k[\R_n]$.
\end{proof}

Our next goal is to compute the automorphism group of $\k[\R_{2n}]$ for all odd $n\geq 3$; see Theorem \ref{thm:r2n}.

\subsubsection{Setup}

Given a commutative ring $\k$ and a quandle $Q$, form a nonunital nonassociative $\k$-algebra $A^\k_Q$ as follows. The underlying $\k$-module of $A_Q^\k$ is $\k[Q]\oplus \k[Q]$. The ring multiplication is 
\[
(a,b)(c,d)\coloneq(ac,bc).
\]
Note that $A^\k_Q$ is not the product in the category of (nonunital nonassociative) $\k$-algebras. 

\begin{lemma}\label{lem:ax}
    For all commutative rings $\k$ and racks $Q$, there is a canonical $\k$-algebra isomorphism 
\[
\psi\colon \k[Q\times \T_2]\bij A_Q^\k.
\]
\end{lemma}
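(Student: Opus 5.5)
Write $\T_2=\{t_1,t_2\}$. The product quandle $Q\times\T_2$ has operation $(x,t_i)(y,t_j)=(xy,t_i)$, because $\T_2$ is trivial. So the second coordinate of a product is always inherited from the left factor. This suggests identifying the copy $Q\times\{t_1\}$ with the first summand of $A^\k_Q$ and the copy $Q\times\{t_2\}$ with the second summand. Concretely, I would define $\psi$ on the $\k$-basis $Q\times\T_2$ of $\k[Q\times\T_2]$ by
\[
\psi\big((x,t_1)\big)=(x,0),\qquad \psi\big((x,t_2)\big)=(0,x),
\]
and extend $\k$-linearly.

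The first step is to check that $\psi$ is a $\k$-module isomorphism. This holds because it sends a basis bijectively onto the standard basis $\{(x,0)\}\cup\{(0,x)\}$ of the free $\k$-module $\k[Q]\oplus\k[Q]$.

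The second step is multiplicativity. Both multiplications are $\k$-bilinear, so it suffices to check the four basis cases. In $A_Q^\k$ we have $(a,b)(c,d)=(ac,bc)$, which gives:
\begin{itemize}
\item $(x,0)(y,0)=(xy,0)=\psi\big((xy,t_1)\big)$, matching $(x,t_1)(y,t_1)=(xy,t_1)$;
\item $(x,0)(0,y)=(0,0)$, whereas $(x,t_1)(y,t_2)=(xy,t_1)$;
\item $(0,x)(y,0)=(0,xy)=\psi\big((xy,t_2)\big)$, matching $(x,t_2)(y,t_1)=(xy,t_2)$;
\item $(0,x)(0,y)=(0,0)$, whereas $(x,t_2)(y,t_2)=(xy,t_2)$.
\end{itemize}

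The second and fourth cases fail, so this naive map is not multiplicative. That failure is the main obstacle: the multiplication on $A_Q^\k$ depends only on the first coordinate $c$ of the right factor. I therefore need a basis change in which the right factor's contribution is read through the augmentation-type sum of its two components.

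I would try
\[
\psi\big((x,t_1)\big)=(x,0),\qquad \psi\big((x,t_2)\big)=(x,x)-(x,0)\ \text{adjusted},
\]
more precisely $\psi(u)=(\pi(u),\,\pi_2(u))$. Here $\pi\colon\k[Q\times\T_2]\to\k[Q]$ is induced by the projection $(x,t_i)\mapsto x$, which is a $\k$-algebra map, and $\pi_2(u)$ is the part of $\pi(u)$ coming from the $t_2$-terms. Thus $\psi\big((x,t_1)\big)=(x,0)$ and $\psi\big((x,t_2)\big)=(x,x)$.

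Now check multiplicativity on basis elements. For $u=(x,t_i)$ and $v=(y,t_j)$, the product $uv=(xy,t_i)$ has image $(xy,\delta_{i2}\,xy)$. On the other side,
\[
\psi(u)\psi(v)=(x,\delta_{i2}x)(y,\delta_{j2}y)=(xy,\delta_{i2}\,xy),
\]
because only the first coordinate $y$ of $\psi(v)$ enters. The two sides agree for all $i,j$.

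Bijectivity holds because the change of basis $(x,0),(x,x)$ is unitriangular: $(0,x)=(x,x)-(x,0)$.

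So $\psi$ defined by $(x,t_1)\mapsto(x,0)$ and $(x,t_2)\mapsto(x,x)$ is the canonical $\k$-algebra isomorphism. The argument uses nothing beyond the rack structure of $Q$ and the triviality of $\T_2$.
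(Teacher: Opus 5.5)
Your final map $(x,t_1)\mapsto(x,0)$, $(x,t_2)\mapsto(x,x)$ is exactly the map the paper uses. Your basis-level multiplicativity check and the unitriangular change-of-basis argument for bijectivity correctly supply the verification the paper leaves as ``straightforward.'' In a write-up, drop the abandoned naive attempt and the garbled ``adjusted'' line. Also say ``product rack'' rather than ``product quandle,'' since the argument never uses idempotency.
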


\begin{proof}
    Write $\T_2=\{a,b\}$, and define $\psi$ by
\[
(x,a)\mapsto(x,0),\qquad (x,b)\mapsto (x,x).
\]
It is straightforward to check that $\psi$ is a $\k$-algebra isomorphism.
\end{proof}

\begin{remark}
    For all commutative rings $\k$ and racks $X$ and $Y$, there is a unique $\k$-algebra isomorphism
\[
\k[X\times Y]\cong \k[X]\otimes_{\k}\k[Y].
\]
This is straightforwardly checked using the universal property of the tensor product of (nonunital nonassociative) $\k$-algebras.
\end{remark}

\subsubsection{Symmetric circulant matrices}

In the following, let $\k$ be a unital associative commutative ring, let $n\geq 1$, and fix a $\k$-basis $\{e_1,\dots,e_n\}$ of $\k^n$. 
Let $\sigma,\rho\in\GL_n(\k)$ be the shift operator $\sigma(e_i)\coloneq e_{i+1}$ and the reversal operator $\rho(e_i)\coloneq e_{-i}$, where the indices are considered modulo $n$. 
Recall that $n\times n$ matrices of the form
\[
\sum^{n-1}_{i=0}c_{i+1}\sigma^i=\begin{pmatrix}
    c_1&c_2&\cdots &c_n\\
    c_n&c_1&\cdots&c_{n-1}\\
    \vdots&\vdots&\ddots&\vdots\\
    c_2&c_3&\cdots&c_1
\end{pmatrix}\in M_n(\k)
\]
are called \emph{circulant}; see \cite{circulant} for a reference. 

Let $\SC_n(\k)$ be the commutative $\k$-subalgebra of symmetric circulant matrices in $M_n(\k)$. Note that $\SC_n(\k)\cong\k^{\lfloor n/2\rfloor+1}$ as $\k$-modules.  
To compute $\Aut(\k[\R_{2n}])$ when $n$ is odd, we need the following facts about $\SC_n(\k)$.

\begin{lemma}\label{lem:scnk}
    Let $\k$ be a unital associative commutative ring, and let $n\geq 1$. Then $\SC_n(\k)$ is precisely the centralizer of $\sigma$ and $\rho$ in $M_n(\k)$.
\end{lemma}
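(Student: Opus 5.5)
The plan is to prove both inclusions directly, working with matrix entries indexed by $\Z_n$. I would first make the symmetric condition concrete. A circulant matrix $C=\sum_{i}c_{i+1}\sigma^i$ is symmetric exactly when $c_{i+1}=c_{-i+1}$ for all $i$, that is, when its first-row coefficients are invariant under $i\mapsto -i$. The key observation is that, since $\rho\sigma\rho^{-1}=\sigma^{-1}$ (check on basis vectors: $\rho\sigma\rho(e_i)=\rho\sigma(e_{-i})=\rho(e_{1-i})=e_{i-1}$), we have $\rho C\rho^{-1}=\sum_i c_{i+1}\sigma^{-i}$. On the other hand, the transpose of $\sigma^i$ is $\sigma^{-i}$, because $\sigma$ is a permutation matrix. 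Hence $C^T=\rho C\rho^{-1}$ for every circulant $C$. So a circulant matrix is symmetric if and only if it commutes with $\rho$.

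With this in hand, the statement reduces to the classical fact that the centralizer of $\sigma$ in $M_n(\k)$ is exactly the set of circulant matrices. The inclusion ``circulant $\Rightarrow$ commutes with $\sigma$'' is immediate, since circulants are polynomials in $\sigma$. For the converse, suppose $A=(a_{ij})$ satisfies $A\sigma=\sigma A$. Comparing entries, with all indices taken mod $n$, gives $a_{i,j-1}=a_{i+1,j}$, up to the orientation convention for $\sigma$. Thus $a_{ij}$ depends only on $j-i$, so $A$ is circulant. This entrywise comparison works over any commutative ring, since it involves only permutation matrices and no division.

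Putting the pieces together: if $A$ commutes with both $\sigma$ and $\rho$, then $A$ is circulant, and then $A^T=\rho A\rho^{-1}=A$, so $A\in\SC_n(\k)$. Conversely, if $A\in\SC_n(\k)$, then $A$ commutes with $\sigma$, being a polynomial in $\sigma$. It also commutes with $\rho$, because $\rho A\rho^{-1}=A^T=A$.

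I expect the main obstacle to be bookkeeping rather than ideas. One must verify $\rho\sigma\rho^{-1}=\sigma^{-1}$ and $(\sigma^i)^T=\sigma^{-i}$ under the paper's exact conventions: the indexing of $e_i$ mod $n$, and the fact that the displayed matrix is $\sum_i c_{i+1}\sigma^i$ written in a particular orientation. Any mismatch there would only swap $\sigma$ with $\sigma^{-1}$. That swap does not affect the argument, since $\sigma$ and $\sigma^{-1}$ have the same centralizer.
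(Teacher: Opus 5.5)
Your proof is correct and follows essentially the same route as the paper. Both arguments show that the centralizer of $\sigma$ is the algebra of circulant matrices (the paper cites Davis, while you verify it entrywise), and that among circulants, commuting with $\rho$ is equivalent to being symmetric. The paper phrases this second step through centrosymmetric matrices; your identity $\rho C\rho^{-1}=C^T$ handles it directly, and it also avoids a small looseness in the paper's wording. The matrix $\rho$ fixes $e_n$, so it is not literally the exchange matrix that defines centrosymmetry; that matrix is $\sigma\rho$, which does no harm once $\sigma$ is also in the centralizer.
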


\begin{proof}
    The $\k$-algebra of circulant matrices is the centralizer of $\sigma$ in $M_n(\k)$ (see \cite[Thm.\ 3.1.1]{circulant}), and likewise for centrosymmetric matrices and $\rho$. The reader can verify that a circulant matrix is symmetric if and only if it is centrosymmetric.
\end{proof}

Interpret $\Aff_1(\Z_n)$ as the group of invertible affine transformations $T_{a,b}(x)\coloneq ax+b$ of the affine line $\mathbb A^1_{\Z_n}=\Z_n$, and let $\Aff_1(\Z_n)$ act on the basis $\{e_1,\dots,e_n\}$ of $\k^n$ in the obvious way. For each affine transformation $T_{a,b}\in\Aff_1(\Z_n)$, let $P_{a,b}\in\GL_n(\k)$ be the corresponding permutation matrix. 
\begin{lemma}\label{lemma:conjugation}
    Let $\k$ be a unital associative commutative ring, and let $n\geq 1$. Then the ring of symmetric circulant matrices $\SC_n(\k)$ is stable under the right action of $\Aff_1(\Z_n)$ on $M_n(\k)$ by conjugation. That is,
    \[
    M^{T_{a,b}} \coloneq P_{a,b}MP_{a,b}\inv\in\SC_n(\k)
    \]
    for all $M\in\SC_n(\k)$ and $T_{a,b}\in\Aff_1(\Z_n)$. In particular, the action extends to an action of $\Aff_1(\Z_n)$ on $\SC_n(\k)\rtimes\SC_n(\k)^\times=\Aff_1(\SC_n(\k))$ by conjugation on both coordinates.
\end{lemma}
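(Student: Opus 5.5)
We will use Lemma \ref{lem:scnk}, which identifies $\SC_n(\k)$ with the set of matrices in $M_n(\k)$ commuting with both $\sigma$ and $\rho$. It therefore suffices to show that if $M$ commutes with $\sigma$ and $\rho$, then so does $P_{a,b}MP_{a,b}\inv$.

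The key step is to record how $P_{a,b}$ normalizes the subgroup $\langle\sigma,\rho\rangle$, which is the dihedral group of affine maps $x\mapsto \pm x+c$ acting on the basis. Since $T_{a,b}\mapsto P_{a,b}$ is a group homomorphism, conjugation of permutation matrices corresponds to conjugation of affine maps. Writing $\sigma=P_{1,1}$ and $\rho=P_{-1,0}$, we compute
\[
T_{a,b}\inv T_{1,1}T_{a,b}(x)=a\inv(ax+b+1-b)=x+a\inv .
\]
Hence $P_{a,b}\inv\sigma P_{a,b}=\sigma^{a\inv}$. Similarly,
\[
T_{a,b}\inv T_{-1,0}T_{a,b}(x)=a\inv(-ax-b-b)=-x-2a\inv b ,
\]
so $P_{a,b}\inv\rho P_{a,b}=\sigma^{-2a\inv b}\rho$, which lies in $\langle\sigma,\rho\rangle$. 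The only thing to be careful about here is the convention for whether $P$ acts on the left or on the right; this is routine, and with either convention the conjugates land in $\langle\sigma,\rho\rangle$.

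Now let $M\in\SC_n(\k)$ and $g\in\{\sigma,\rho\}$, and set $h\coloneq P_{a,b}\inv gP_{a,b}\in\langle\sigma,\rho\rangle$. Since $M$ commutes with $\sigma$ and $\rho$, it commutes with every element of $\langle\sigma,\rho\rangle$, in particular with $h$. Therefore
\[
P_{a,b}MP_{a,b}\inv\, g=P_{a,b}Mh P_{a,b}\inv=P_{a,b}hMP_{a,b}\inv=g\,P_{a,b}MP_{a,b}\inv ,
\]
so $M^{T_{a,b}}$ commutes with $\sigma$ and $\rho$, and Lemma \ref{lem:scnk} gives $M^{T_{a,b}}\in\SC_n(\k)$. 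That this defines a right action is immediate from the homomorphism property of $T\mapsto P_T$.

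For the last assertion, conjugation by $P_{a,b}$ is a unital ring automorphism of $M_n(\k)$. By the above it restricts to a ring automorphism of $\SC_n(\k)$, and hence it preserves the unit group $\SC_n(\k)^\times$. A ring automorphism $f$ of a commutative ring $R$ induces an automorphism $(r,u)\mapsto(f(r),f(u))$ of $\Aff_1(R)=R\rtimes R^\times$, because the multiplication $(r,u)(r',u')=(r+ur',uu')$ is built from ring operations. Applying this coordinatewise to $R=\SC_n(\k)$ gives the action on $\Aff_1(\SC_n(\k))$.

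I do not expect a genuine obstacle. The only point that needs care is keeping the left/right conventions for the permutation matrices consistent.
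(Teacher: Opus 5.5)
Your proof is correct, but it takes a different route from the paper's. The paper argues by direct computation. It writes $M=\sum_i c_i\sigma^i$ with $c_i=c_{-i}$ and uses $P_{a,b}\sigma^iP_{a,b}\inv=\sigma^{ai}$ to get $M^{T_{a,b}}=\sum_i c_i\sigma^{ai}$. After reindexing by $a\inv$, this is again circulant with symmetric coefficients.

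You instead go through Lemma~\ref{lem:scnk}, recasting $\SC_n(\k)$ as the centralizer of $\langle\sigma,\rho\rangle$. You then check that $P_{a,b}$ normalizes $\langle\sigma,\rho\rangle$, i.e.\ that the maps $x\mapsto\pm x+c$ form a normal subgroup of $\Aff_1(\Z_n)$. Finally you use the general principle that whatever normalizes a set of matrices preserves its centralizer.

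Your version is more structural and would work unchanged for any invertible matrix normalizing the dihedral group, but it depends on Lemma~\ref{lem:scnk}. The paper's computation is self-contained and gives the action explicitly. In particular, it shows that the translation part $b$ and the sign of $a$ act trivially, so the action factors through $\Z_n^\times/\{\pm1\}$. You also spell out the extension to $\Aff_1(\SC_n(\k))$, which the paper leaves implicit.

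One small caveat concerns your claim that the right action is ``immediate'' from the homomorphism property. With $T\mapsto P_T$ a homomorphism, as in your computation, one gets $(M^{T})^{T'}=M^{T'T}$, so $M\mapsto P_TMP_T\inv$ is a left action. Calling it a right action, as the statement does, requires the opposite product on $\Aff_1(\Z_n)$. Nothing in your stability argument depends on this.
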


\begin{proof}
    Given a symmetric circulant matrix \[
    M=\sum^{n-1}_{i=0}c_i\sigma^i,\qquad c_i=c_{-i},
    \]
    a direct calculation shows that
    \[
     M^{T_{a,b}}=\sum^{n-1}_{i=0}c_i\sigma^{ai},\qquad c_i=c_{-i}.
    \]
    Since $a$ is invertible in $\Z_n$, the matrix $M^{T_{a,b}}$ is symmetric circulant.
\end{proof}

\subsubsection{Automorphism group calculation} 
Let $n\geq 1$, and write $\R_n=\{a_0,\dots,a_{n-1}\}$. We begin with a lemma. Given a unital associative commutative ring $\k$, consider the $\k$-algebra $\operatorname{End}_{\k\text{-}\mathsf{Mod}}(\k[\R_n])$ of $\k$-module endomorphisms of $\k[\R_n]$. In this $\k$-algebra, let $E^\k_{\R_n}$ be the centralizer of the right multiplication maps $S_{a_0},\dots,S_{a_{n-1}}$. Then
\begin{equation}\label{eq:ekrn}
    E^\k_{\R_n}=\{f\in\operatorname{End}_{\k\text{-}\mathsf{Mod}}(\k[\R_n])\mid f(xy)=f(x)y\text{ for all }x,y\in\k[\R_n]\}.
\end{equation}

\begin{lemma}\label{lemma:ern}
    For all odd $n\geq 1$, the canonical $\k$-module isomorphism $\k[\R_n]\cong\k^n$ induces a canonical $\k$-algebra isomorphism $E^\k_{\R_n}\cong \SC_n(\k)$.
\end{lemma}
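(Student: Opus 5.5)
The plan is to compute the matrices of the right multiplications $S_{a_j}$ in the basis $\{a_0,\dots,a_{n-1}\}$ and then apply Lemma \ref{lem:scnk}. In $\R_n$ we have $a_i a_j = a_{2j-i}$, so $S_{a_j}$ permutes the basis via $i\mapsto 2j-i$. Identify $a_i$ with the basis vector $e_i$ (indices modulo $n$). Then $S_{a_0}=\rho$, and $S_{a_j}=\sigma^{2j}\rho$. Indeed, $\sigma^{2j}\rho(e_i)=\sigma^{2j}e_{-i}=e_{2j-i}$.

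First, since each $S_{a_j}$ is a $\k$-linear map, $E^\k_{\R_n}$ is the centralizer in $M_n(\k)$ of the set $\{\sigma^{2j}\rho\mid j\in\Z_n\}$. Second, this set generates the same subalgebra of $M_n(\k)$ as $\{\sigma,\rho\}$:
\begin{itemize}
\item $\rho=S_{a_0}$ is in the set;
\item $S_{a_j}S_{a_0}=\sigma^{2j}\rho^2=\sigma^{2j}$, since $\rho^2=\id$;
\item because $n$ is odd, $2$ is invertible modulo $n$, so some $j$ satisfies $2j\equiv 1$, giving $\sigma$ itself (this is essentially the argument of Lemma \ref{lem:odd});
\item conversely, every $S_{a_j}=\sigma^{2j}\rho$ is a product of $\sigma$ and $\rho$.
\end{itemize}
Two generating sets of the same subalgebra have the same centralizer, so $E^\k_{\R_n}$ equals the centralizer of $\sigma$ and $\rho$. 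By Lemma \ref{lem:scnk}, that centralizer is exactly $\SC_n(\k)$.

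Finally, I would check that the identification respects the $\k$-algebra structure. Composition of endomorphisms corresponds to matrix multiplication under $\operatorname{End}_{\k\text{-}\mathsf{Mod}}(\k[\R_n])\cong M_n(\k)$, and this isomorphism restricts to the centralizers. The identification is canonical because it depends only on the canonical basis $Q$ of $\k[Q]$.

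There is one subtle point. Equation \eqref{eq:ekrn} uses $f(xy)=f(x)y$ for all $x,y\in\k[\R_n]$, whereas the centralizer condition only requires $fS_{a_j}=S_{a_j}f$ on basis elements. The two agree by $\k$-bilinearity of the product, which is immediate.

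The main obstacle is keeping the indexing conventions consistent: the basis of $\k^n$ is indexed by $1,\dots,n$, while $\R_n=\Z_n$ starts at $a_0$. A shift of the labelling conjugates everything by a power of $\sigma$, which commutes with $\sigma$ and sends $\rho$ to $\sigma^{k}\rho$. That does not change the generated subalgebra, so the conclusion is unaffected.
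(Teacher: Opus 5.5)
Your proposal is correct and follows the paper's proof essentially verbatim: you identify $S_{a_j}=\sigma^{2j}\circ\rho$, use the invertibility of $2$ modulo the odd integer $n$ to show that these maps generate the same (dihedral) group, hence the same subalgebra, as $\sigma$ and $\rho$, and conclude by Lemma~\ref{lem:scnk}. Your extra remarks on bilinearity and index conventions are harmless bookkeeping that the paper leaves implicit.
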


\begin{proof}
    Identify $\k[\R_n]\cong\k^n$ via the basis $\{a_0,\dots,a_{n-1}\}$ of $\k[\R_n]$, and let $\sigma,\rho\in\GL_n(\k)$ be the shift and reversal operators defined earlier. Then
    \[
    S_{a_i}(a_j)=a_{2i-j}=(\sigma^{2i}\circ\rho)(a_j)
    \]
    for all $i,j\in\Z_n$. Since $n$ is odd, the subgroup of $\GL_n(\k)$ generated by the maps $\sigma^{2i}\circ\rho$ with $i\in\Z_n$ equals the (dihedral) subgroup generated by $\sigma$ and $\rho$. 
    Therefore, the claim follows from Lemma \ref{lem:scnk} and the definition of $E^\k_{\R_n}$.
\end{proof}

\begin{theorem}\label{thm:r2n}
    Let $\k$ be a unital associative commutative ring. Then for all odd integers $n\geq 1$, 
    there exists an action of $\Aut(\k[\R_n])$ on $\Aff_1(\SC_n(\k))$ such that
    \[
    \Aut(\k[\R_{2n}])\cong \Aff_1(\SC_n(\k))\rtimes \Aut(\k[\R_n]).
    \]
    In particular, 
\[
\Aut(\Z[\R_{2n}])\cong \Aff_1(\SC_n(\Z))\rtimes \Aff_1(\Z_n),
\]
where $\Aff_1(\Z_n)$ acts on $\Aff_1(\SC_n(\Z))$ by conjugation in the sense of Lemma \ref{lemma:conjugation}.
\end{theorem}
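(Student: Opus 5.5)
The plan is to reduce to the algebra $A^\k_{\R_n}$ of Lemma \ref{lem:ax} and then analyse its automorphisms through a canonical ideal.

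\textbf{Reduction to $A^\k_{\R_n}$.} Since $n$ is odd, the Chinese remainder theorem gives $\Z_{2n}\cong\Z_n\times\Z_2$. The core construction commutes with products, so $\R_{2n}\cong\R_n\times\Core(\Z_2)$. In $\Z_2$ we have $x*y=2y-x=x$, so $\Core(\Z_2)=\T_2$. Lemma \ref{lem:ax} then gives
\[
\k[\R_{2n}]\cong A:=A^\k_{\R_n},\qquad (a,b)(c,d)=(ac,bc).
\]

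\textbf{A canonical ideal and the splitting.} Let $R=\{z\in A\mid Az=0\}$ be the right annihilator of $A$; it is preserved by every automorphism. I claim $R=0\oplus\k[\R_n]$.
\begin{itemize}
\item Clearly $(a,b)(0,d)=0$, so $0\oplus\k[\R_n]\subseteq R$.
\item Conversely, if $(c,d)\in R$, then $a_i c=0$ for all $i$. Each $L_{a_i}$ permutes the basis by Lemma \ref{lem:odd}, so the coefficients of $a_ic$ are a permutation of those of $c$, forcing $c=0$.
\end{itemize}
The map $(a,b)\mapsto a$ identifies $A/R$ with $\k[\R_n]$ as algebras. This gives a homomorphism $\pi\colon\Aut(A)\to\Aut(\k[\R_n])$. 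It is split by $\phi\mapsto\phi\oplus\phi$, which is multiplicative because $(\phi a,\phi b)(\phi c,\phi d)=(\phi(ac),\phi(bc))$. Hence $\Aut(A)\cong\ker\pi\rtimes\Aut(\k[\R_n])$.

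\textbf{The kernel.} Let $f\in\ker\pi$. Since $f$ induces the identity on $A/R$ and preserves $R$, we can write $f(a,0)=(a,g(a))$ and $f(0,b)=(0,h(b))$ for $\k$-linear maps $g,h$ on $\k[\R_n]$. Comparing
\[
f((a,b)(c,d))=(ac,\,g(ac)+h(bc))
\quad\text{with}\quad
f(a,b)f(c,d)=(ac,\,(g(a)+h(b))c)
\]
shows that $f$ is multiplicative if and only if $g(ac)=g(a)c$ and $h(bc)=h(b)c$ for all $a,b,c$. By \eqref{eq:ekrn} this says exactly that $g,h\in E^\k_{\R_n}$, which is $\SC_n(\k)$ by Lemma \ref{lemma:ern}.

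Next, $f$ is bijective if and only if $h$ is invertible. Invertibility in $\SC_n(\k)$ and in $\operatorname{End}(\k^n)$ agree, since $\SC_n(\k)$ is a centralizer (Lemma \ref{lem:scnk}) and so contains the inverse of any of its invertible elements.

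For the group law, composing gives $(g,h)\circ(g',h')=(g+hg',\,hh')$. This is the multiplication in $\SC_n(\k)\rtimes\SC_n(\k)^\times=\Aff_1(\SC_n(\k))$. The action of $\Aut(\k[\R_n])$ on this kernel is conjugation by $\phi\oplus\phi$, namely $(g,h)\mapsto(\phi g\phi^{-1},\,\phi h\phi^{-1})$.

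\textbf{The integral case.} For $\k=\Z$, Proposition \ref{prop:odd-dihedral} identifies $\Aut(\Z[\R_n])$ with $\Aff_1(\Z_n)$, acting by the permutation matrices $P_{a,b}$. The conjugation action above is then exactly the one of Lemma \ref{lemma:conjugation}.

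\textbf{Expected obstacle.} The delicate step is the kernel analysis. One has to confirm that every automorphism preserves $R$, which uses the Latin property to see that $R$ is exactly $0\oplus\k[\R_n]$. One also has to verify that the induced group structure on pairs $(g,h)$ is precisely $\Aff_1$, with $g$ as the translation part and $h$ as the linear part, rather than some twisted variant. Finally, the quandle isomorphism $\R_{2n}\cong\R_n\times\T_2$ should be written out explicitly, as $k\mapsto(k\bmod n,\,k\bmod 2)$, before invoking Lemma \ref{lem:ax}.
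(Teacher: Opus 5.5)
Your proposal is correct and is essentially the paper's proof: the paper's map $\Phi_{f,\phi,\psi}(x,y)=(\psi(x),(f\circ\psi)(x)+(\phi\circ\psi)(y))$ is exactly your kernel element $(g,h)=(f,\phi)$ composed with your section $\psi\oplus\psi$. Surjectivity is obtained the same way, from the right annihilator $0\oplus\k[\R_n]$, the induced automorphism of the quotient, and the characterization \eqref{eq:ekrn} of $E^\k_{\R_n}\cong\SC_n(\k)$; your split-extension packaging and your Latin-property proof that the right annihilator is exactly $0\oplus\k[\R_n]$ simply supply details (the homomorphism property and that identification) which the paper leaves to the reader or asserts without proof.
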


\begin{proof}
    The assignment of a group to its core quandle is functorial, so the group isomorphism $\Z_{2n}\cong\Z_n\times\Z_2$ descends to a quandle isomorphism $\R_{2n}\cong \R_n\times\R_2\cong \R_n\times \T_2$. Therefore, Lemma \ref{lem:ax} provides a $\k$-algebra isomorphism $\k[\R_{2n}]\cong\k[\R_n\times \T_2]\cong A_{\R_n}^\k$. So, it suffices to compute $\Aut(A_{\R_n}^\k)$.

    By Proposition \ref{prop:odd-dihedral} and Lemma \ref{lemma:ern}, it suffices to construct a group isomorphism
    \[
    \Phi\colon \Aff_1(E_{\R_n}^\k)\rtimes\Aut(\k[\R_n])\to\Aut(A_{\R_n}^\k),
    \]
    where $\Aut(\k[\R_n])$ acts on $\Aff_1(E_{\R_n}^\k)=E_{\R_n}^\k\rtimes (E_{\R_n}^\k)^\times$ by conjugation on both coordinates. In other words, the group operation of $\Aff_1(E_{\R_n}^\k)\rtimes\Aut(\Z[\R_n])$ is
    \[
    (f,\phi_1,\psi_1)(g,\phi_2,\psi_2)=(f+(\phi_1\circ g^{\psi_1}),\phi_1\circ \phi_2^{\psi_1},\psi_1\circ\psi_2),
    \]
    where the superscripts $h^{\psi_1}\coloneq\psi_1 \circ h\circ\psi_1\inv$ denote conjugation by $\psi_1$. 
    Define $\Phi$ by sending each triple $(f,\phi,\psi)$ to the $\k$-linear map
    \[
    \Phi_{f,\phi,\psi}\colon A_{\R_n}^\k\to A_{\R_n}^\k,\qquad (x,y)\mapsto(\psi(x),(f\circ\psi)(x)+(\phi\circ\psi)(y)).
    \]
    The reader can verify by routine (albeit tedious) calculation that $\Phi_{f,\phi,\psi}$ is a $\k$-algebra automorphism with two-sided inverse
    \[
    \Phi_{f,\phi,\psi}\inv(x,y)=(\psi\inv(x),(\psi\inv\circ\phi\inv)(y-f(x)))
    \]
    and that $\Phi$ is an injective group homomorphism. As a hint, injectivity is shown by evaluating elements of $\ker(\Phi)$ first at $(x,0)\in A_{\R_n}^\k$ and then at $(0,y)\in A_{\R_n}^\k$.

    It remains to show that $\Phi$ is surjective. Fix $\Psi\in\Aut(A_{\R_n}^\k)$. We have to find a triple $(f,\phi,\psi)$ such that $\Psi=\Phi_{f,\phi,\psi}$. 
    The right annihilator of $A_{\R_n}^\k$ is the two-sided ideal $I\coloneq 0\oplus \k[\R_n]$, so $\Psi(I)=I$. In particular, $\Psi$ descends to an automorphism $\psi$ of the quotient ring $A_{\R_n}^\k/I\cong\k[\R_n]$. As a result, we can define $\k$-module endomorphisms $\alpha,\beta\colon \k[\R_n]\to\k[\R_n]$ via the formulas \[\Psi(x,0)=(\psi(x),\alpha(x)),\qquad\Psi(0,y)=(0,\beta(y)).\] Then 
    $\beta$ is bijective, and the reader can verify directly that \[\Psi(x,y)=(\psi(x),\alpha(x)+\beta(y)),\quad\alpha(xy)=\alpha(x)\psi(y),\quad \beta(xy)=\beta(x)\psi(y)\] for all $x,y\in\k[\R_n]$. It follows from \eqref{eq:ekrn} that the composites
    \[
    f\coloneq \alpha\circ\psi\inv,\qquad \phi\coloneq \beta\circ\psi\inv
    \]
    lie in $E_{\R_n}^\k$, and $\phi$ is invertible because $\beta$ is invertible. 
    Hence, $\Psi=\Phi_{f,\phi,\psi}$ as desired.
\end{proof}

Of course, the $n=1$ case of Theorem \ref{thm:r2n} recovers the first part of Theorem \ref{thm:t2}.

Together, Proposition \ref{prop:odd-dihedral} and Theorems \ref{thm:aff} and \ref{thm:r2n} answer Problem \ref{prob:aut} except for the remaining open case.

\begin{problem}\label{prob:aut-even}
    Compute the automorphism group of $\Z[\R_{2n}]$ for all even integers $n\geq 2$.
\end{problem}

\subsection{A canonical embedding}
Although we do not have a complete answer to Problem \ref{prob:aut-even}, imitating the start of the proof of Theorem \ref{thm:r2n} leads to a partial description of $\Aut(\k[\R_{2n}])$ when $n\geq 2$ is even.

Let $\k$ be a unital associative commutative ring, and fix an integer $n\geq 1$. Then the natural projection $\Z_{2n}\twoheadrightarrow\Z_n$ descends to a surjective quandle homomorphism $\R_{2n}\twoheadrightarrow\R_n$ that, in turn, induces a surjective $\k$-algebra homomorphism $\pi\colon \k[\R_{2n}]\twoheadrightarrow\k[\R_n]$. If we write $\R_{2n}=\{a_0,\dots,a_{2n-1}\}$ and $I_\k\coloneq \ker\pi$, then
    \[
    I_\k=\operatorname{span}_\k\langle b_0,\dots,b_{n-1}\rangle=\operatorname{span}_\k\langle b_n,\dots,b_{2n-1}\rangle,\qquad b_i\coloneq a_i-a_{i+n},
    \]
    where the indices are considered modulo $2n$. 
    
    Evidently, $I_\k$ is the right annihilator of $\k[\R_{2n}]$, and $S_{a_j}(b_i)=b_{2j-i}$ for all $i,j\in\Z_{2n}$, where $S_{a_j}\colon\k[\R_{2n}]\to\k[\R_{2n}]$ denotes right multiplication by $a_j$. In particular, every automorphism $\phi$ of $\k[\R_{2n}]$ restricts to an automorphism $\phi|_{I_\k}$ of $I_\k$ and descends to a $\k$-algebra automorphism $\overline{\phi}$ of $\k[\R_{2n}]/I_\k\cong\pi(\k[\R_{2n}])\cong\k[\R_n]$. For example, if $n$ is even, then
    \[
   S_{a_{j+n/2}}(b_i)=b_{2j+n-i}=-b_{2j-i}=-S_{a_j}(b_i)
    \]
    for all $i,j\in\Z_{2n}$, so \begin{equation}\label{eq:saj}
        S_{a_{j+n/2}}|_I=-S_{a_j}|_I.
    \end{equation}
    In particular, 
    \begin{equation}\label{eq:saj2}
        S_{a_{j+n}}|_I=-S_{a_{j+n/2}}|_I=S_{a_j}|_I
    \end{equation}
    for all $j\in\k_{2n}$, so the right $\k[\R_{2n}]$-module structure on $I_\k$ factors through a right $\k[\R_n]$-module structure on $I_\k$.

    The following proposition states that, for all even $n\geq 2$ and for all suitable rings $\k$, the actions of $\Aut(\k[\R_{2n}])$ on $I_\k$ and $\k[\R_{2n}]/I_\k\cong\k[\R_n]$ completely determine $\Aut(\k[\R_{2n}])$ and do not interact with each other.

\begin{theorem}\label{prop:embedding}
    Let $\k$ be an integral domain with $\chara\k\neq 2$. Then for each even integer $n\geq 2$, there exists a canonical embedding
    \[
    \Aut(\k[\R_{2n}])\hookrightarrow\GL_n(\k)\times\Aut(\k[\R_n]). 
    \]
\end{theorem}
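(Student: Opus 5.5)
The embedding will be the map $\phi\mapsto(\phi|_{I_\k},\overline{\phi})$. Here $I_\k\cong\k^n$ via the basis $b_0,\dots,b_{n-1}$, so $\phi|_{I_\k}\in\GL_n(\k)$. Both components are well defined by the discussion preceding the statement: $I_\k$ is the right annihilator of $\k[\R_{2n}]$, so every automorphism preserves it and descends to $\k[\R_{2n}]/I_\k\cong\k[\R_n]$. Restriction and passage to the quotient are compatible with composition, so the map is a group homomorphism. The whole content of the theorem is therefore injectivity.

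Suppose $\phi|_{I_\k}=\id$ and $\overline{\phi}=\id$. Then $\phi(a_j)=a_j+\delta_j$ with $\delta_j\in I_\k$ for each $j\in\Z_{2n}$. Since $b_i=a_i-a_{i+n}$ and $\phi(b_i)=b_i$, we get $\delta_i=\delta_{i+n}$. Now I apply $\phi$ to the relation $a_ia_j=a_{2j-i}$, using that $I_\k$ annihilates on the right, so that $a_i\delta_j=0$ and $\delta_i\delta_j=0$. This gives
\[
a_{2j-i}+\delta_{2j-i}=(a_i+\delta_i)(a_j+\delta_j)=a_{2j-i}+\delta_i a_j,
\]
that is, $\delta_{2j-i}=S_{a_j}(\delta_i)$ for all $i,j$. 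Taking $i=j$ yields $S_{a_j}(\delta_j)=\delta_j$. Replacing $j$ by $j+n/2$ and using \eqref{eq:saj} gives $S_{a_{j+n/2}}(\delta_j)=-S_{a_j}(\delta_j)=-\delta_j$.

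Next I express $\delta_{2(j+n/2)-j}=\delta_{j+n}=\delta_j$ in two ways. From the relation with $i=j$ and $j$ replaced by $j+n/2$, we get $\delta_j=\delta_{j+n}=S_{a_{j+n/2}}(\delta_j)=-\delta_j$. Hence $2\delta_j=0$. Since $I_\k$ is a free $\k$-module and $\k$ is a domain of characteristic different from $2$, this forces $\delta_j=0$. Therefore $\phi=\id$, and the map is injective.

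The only delicate point is the sign bookkeeping in \eqref{eq:saj}, which is exactly where evenness of $n$ enters. I must check that $2(j+n/2)-j=j+n$ modulo $2n$ and that $\delta_{j+n}=\delta_j$ follows from $\phi|_{I_\k}=\id$. The hypothesis $\chara\k\neq2$ together with $\k$ being a domain is used only in the final cancellation $2\delta_j=0\Rightarrow\delta_j=0$. Finally, I should verify that the identification $I_\k\cong\k^n$ via $b_0,\dots,b_{n-1}$ is canonical, which justifies calling the embedding canonical.
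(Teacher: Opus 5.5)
Your proposal is correct and follows essentially the same argument as the paper. Your $\delta_j=\phi(a_j)-a_j$ is the paper's $c_j=\overline\psi(\pi(a_j))$, where $\psi=\phi-\id$. Your steps match the paper's: the relations $\delta_{2j-i}=S_{a_j}(\delta_i)$ and $\delta_{j+n}=\delta_j$, then the sign identity \eqref{eq:saj}, giving $2\delta_j=0$. The only difference is presentation: the paper gets $c_{j+n}=c_j$ by factoring $\psi$ through the quotient, whereas you read it off directly from $\phi(b_j)=b_j$.
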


\begin{proof}
    The $\k$-basis $\{b_0,\dots,b_{n-1}\}$ of $I_\k$ canonically identifies $I_\k\cong\Z^n$ as $\k$-modules, so it suffices to prove that the canonical group homomorphism
    \[
    \Phi\colon \Aut(\k[\R_{2n}])\to \Aut_{\k\text{-}\mathsf{mod}}(I_\k)\times \Aut(\k[\R_{2n}]/I_\k),\qquad\phi\mapsto(\phi|_I,\overline{\phi})
    \]
    is injective. 
    
    Fix $\phi\in\ker\Phi$, and consider the $\k$-module endomorphism $\psi\coloneq\phi-\id_{\k[\R_{2n}]}$ of $\k[\R_{2n}]$. Since $\overline{\phi}=\id_{\k[\R_{2n}]/I_\k}$, the image of $\psi$ is contained in $I_\k$. On the other hand, $\phi|_{I_\k}=\id|_{I_\k}$, so $\psi$ factors through a unique $\k$-module homomorphism $\overline{\psi}\colon\k[\R_{2n}]/I_\k\to I_\k$. 
    It suffices to show that $\overline\psi\equiv 0$. For all $i\in\Z_{2n}$, let $c_i\coloneq\overline\psi(\pi(a_i))\in I_\k$. Since $I_\k\cong\k^n$ as $\k$-modules, it suffices to show that $2c_i=0$ for all $i\in\Z_{2n}$. 
    
    For all $x,y\in\k[\R_{2n}]$, we first compute
    \begin{align*}
        xy+\overline\psi(\pi(xy))&=xy+\psi(xy)\\
        &=\phi(xy)\\
        &=\phi(x)\phi(y)\\
        &=(x+\psi(x))(y+\psi(y))\\
        &=xy+x\psi(y)+\psi(x)y+\psi(x)\psi(y)\\
        &=xy+\psi(x)y\\
        &=xy+\overline\psi(\pi(x))y,
    \end{align*}
    so $\overline\psi(\pi(xy))=\overline\psi(\pi(x))y$. In the third equality, we used the assumption that $\phi$ is a ring endomorphism; in the sixth equality, we used the fact that $I_\k\supseteq\psi(\k[\R_{2n}])$ is the right annihilator of $\k[\R_{2n}]$. For all $i,j\in\Z_{2n}$, we deduce that
    \[
    S_{a_j}(c_i)=c_ia_j=\overline\psi(\pi(a_i))a_j=\overline\psi(\pi(a_ia_j))=c_{2j-i}
    \]
    and, hence,
    \[
    S_{a_{i+n/2}}(c_i)=c_{i+n}=\overline\psi(\pi(a_{i+n}))=\overline\psi(\pi(a_i))=c_i.
    \]
    Thus, taking $j\coloneq i+n/2$ and applying \eqref{eq:saj} yields
    \[2c_i=S_{a_i}(c_i)+S_{a_{i+n/2}}(c_i)=S_{a_i}(c_i)-S_{a_i}(c_i)=0\]
    for all $i\in\Z_{2n}$, as desired.
\end{proof}

\begin{remark}
    If we attempt the same proof of Theorem \ref{prop:embedding} for odd $n\geq 1$, then we find that $\ker\Phi\cong\k^{(n+1)/2}\cong\SC_n(\k)$ as $\k$-modules; cf.\ Theorem \ref{thm:r2n}.
\end{remark}

\begin{lemma}\label{lem:injective}
    In the above setting, take $\k=\Z$ and $n=4$. Then the four maps $\{\pm S_{a_0}|_I,\pm S_{a_1}|_I\}$ are pairwise distinct.
\end{lemma}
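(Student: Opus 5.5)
The plan is to compute the four maps explicitly on the basis $\{b_0,b_1,b_2,b_3\}$ of $I=I_\Z$ and compare them. Here $n=4$, $\R_8=\{a_0,\dots,a_7\}$, and $b_i=a_i-a_{i+4}$ with indices modulo $8$. In particular $b_{i+4}=-b_i$, so every $b_j$ with $j\in\Z_8$ equals $\pm b_k$ for a unique $k\in\{0,1,2,3\}$. We use the formula $S_{a_j}(b_i)=b_{2j-i}$ established before Theorem \ref{prop:embedding}.

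\textbf{Step 1: the matrix of $S_{a_0}|_I$.} Here $S_{a_0}(b_i)=b_{-i}$. This gives
\[
b_0\mapsto b_0,\qquad b_1\mapsto b_7=-b_3,\qquad b_2\mapsto b_6=-b_2,\qquad b_3\mapsto b_5=-b_1.
\]

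\textbf{Step 2: the matrix of $S_{a_1}|_I$.} Here $S_{a_1}(b_i)=b_{2-i}$. This gives
\[
b_0\mapsto b_2,\qquad b_1\mapsto b_1,\qquad b_2\mapsto b_0,\qquad b_3\mapsto b_7=-b_3.
\]

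\textbf{Step 3: comparison.} Since $I\cong\Z^4$ is free and $\chara\Z\neq 2$, a nonzero endomorphism never equals its negative. So $S\neq -S$ for each $S\in\{S_{a_0}|_I,S_{a_1}|_I\}$, because both are invertible and hence nonzero. It remains to check $S_{a_0}|_I\neq\pm S_{a_1}|_I$. Evaluating at $b_0$ settles this: $S_{a_0}(b_0)=b_0$, while $\pm S_{a_1}(b_0)=\pm b_2$, and $b_0\neq\pm b_2$ because $b_0,b_2$ are linearly independent basis elements. All six pairs are therefore distinct.

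The only real obstacle is bookkeeping: reducing indices modulo $8$ correctly and tracking the sign flips from $b_{i+4}=-b_i$. Note that equation \eqref{eq:saj} is consistent with these computations, since $S_{a_2}|_I=-S_{a_0}|_I$ and $S_{a_3}|_I=-S_{a_1}|_I$. So the four maps in the lemma are exactly the restrictions $S_{a_j}|_I$ for $j\in\{0,1,2,3\}$.
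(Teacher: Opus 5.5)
Your proof is correct and is the paper's argument: torsion-freeness of $I\cong\Z^4$ rules out $S_{a_i}|_I=-S_{a_i}|_I$, and evaluating at $b_0$, where $S_{a_0}(b_0)=b_0\neq\pm b_2=\pm S_{a_1}(b_0)$, separates $S_{a_0}|_I$ from $\pm S_{a_1}|_I$. Your full matrix computations in Steps 1 and 2 are accurate but more than the comparison requires.
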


\begin{proof}
    For each $i=0,1$, we have $S_{a_i}|_I\neq-S_{a_i}|_I$ because $I\cong\Z^4$ is torsionless, and $S_{a_0}(b_0)=b_0\neq \pm b_2=\pm S_{a_1}(b_0)$.
\end{proof}

\begin{corollary}
    There exists a canonical embedding
    \[
    \Aut(\Z[\R_8])\hookrightarrow\GL_4(\Z).
    \]
\end{corollary}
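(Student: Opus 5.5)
The plan is to start from the embedding of Theorem \ref{prop:embedding} with $\k=\Z$ and $n=4$:
\[
\Phi\colon\Aut(\Z[\R_8])\hookrightarrow\GL_4(\Z)\times\Aut(\Z[\R_4]),\qquad \phi\mapsto(\phi|_I,\overline\phi).
\]
I will then show that the second coordinate $\overline\phi$ is determined by the first coordinate $\phi|_I$. Once that is done, the projection $\phi\mapsto\phi|_I\in\Aut_{\Z\text{-}\mathsf{mod}}(I)\cong\GL_4(\Z)$ is already injective. Here $I$ is identified with $\Z^4$ via the basis $\{b_0,b_1,b_2,b_3\}$, and this gives the desired canonical embedding.

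The key compatibility comes from $\phi(xy)=\phi(x)\phi(y)$ with $x\in I$ and $y=a_j$. Since $I$ is the right annihilator, and $\phi(I)=I$, we get
\[
\phi|_I\circ S_{a_j}|_I=S_{\phi(a_j)}|_I\circ\phi|_I .
\]
By \eqref{eq:saj2}, the right action of $\Z[\R_8]$ on $I$ factors through $\Z[\R_4]$. So $S_{\phi(a_j)}|_I$ depends only on $\pi(\phi(a_j))=\overline\phi(\pi(a_j))$. This gives
\[
S_{\overline\phi(\pi(a_j))}|_I=\phi|_I\, S_{\pi(a_j)}|_I\,(\phi|_I)^{-1},
\]
so conjugation by $\phi|_I$ determines the elements $S_{\overline\phi(\pi(a_j))}|_I$.

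Next I use Theorem \ref{thm:t2}(3). It shows $\Aut(\Z[\R_4])$ is a finite group of order $8$, which I expect consists of permutation-type automorphisms. Concretely, each $\overline\phi$ should send the basis $\pi(a_0),\dots,\pi(a_3)$ of $\Z[\R_4]$ to quandle elements, up to the structure visible in the cited computation. Granting this, each $\overline\phi(\pi(a_j))=\pi(a_{k})$ for some $k\in\Z_4$. By \eqref{eq:saj}, the operator $S_{\pi(a_k)}|_I$ lies in $\{\pm S_{a_0}|_I,\pm S_{a_1}|_I\}$. By Lemma \ref{lem:injective}, these four operators are pairwise distinct, so each of them recovers $k\in\Z_4$ uniquely. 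Hence $\phi|_I$ determines $\overline\phi(\pi(a_j))$ for every $j$, and therefore determines $\overline\phi$. If $\phi|_I=\psi|_I$, then $\overline\phi=\overline\psi$, and $\Phi$ being injective gives $\phi=\psi$.

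I expect the main obstacle to be justifying that every automorphism of $\Z[\R_4]$ maps quandle elements to quandle elements, i.e.\ that $\overline\phi(\pi(a_j))\in\R_4$. The group in Theorem \ref{thm:t2}(3) has order $8=|\Aut(\R_4)|$, and $\Aut(\R_4)\le\Aut(\Z[\R_4])$. Since both groups are finite of the same order, the inclusion is an equality. So every $\overline\phi$ is induced by a quandle automorphism, which is exactly what the argument needs.
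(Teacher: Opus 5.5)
Your proposal is correct and follows essentially the same route as the paper: you project the embedding of Theorem \ref{prop:embedding} onto $\Aut_{\Z\text{-}\mathsf{mod}}(I)\cong\GL_4(\Z)$ and recover $\overline\phi$ from $\phi|_I$ via the relation $\phi|_I\circ S_{a_j}|_I=S_{\phi(a_j)}|_I\circ\phi|_I$ together with \eqref{eq:saj}, \eqref{eq:saj2}, and Lemma \ref{lem:injective}; the paper checks only the kernel (the case $\phi|_I=\id$), which is your relation specialized. Your order-counting argument that $\Aut(\R_4)=\Aut(\Z[\R_4])$ also makes explicit a step the paper uses only implicitly, namely when it lets $\Aff_1(\Z_4)$ act on $\Z[\R_8]/I$ by permuting the elements $\pi(a_i)$.
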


\begin{proof}
    Denote $I\coloneq I_\Z$. Recall that the only nonabelian nontrivial semidirect product of order $8$ is the dihedral group $D_4$. Therefore, Theorem \ref{thm:t2} provides isomorphisms $\Aut(\Z[\R_8]/I)\cong\Aut(\Z[\R_4])\cong D_4\cong\Aff_1(\Z_4)$, where we view $\Aff_1(\Z_4)$ as the group of invertible affine transformations of the affine line $\mathbb A^1_{\Z_4}=\Z_4$.  
    Under this identification, each $\overline\phi\in\Aff_1(\Z_4)$ acts on $\Z[\R_8]/I$ via $\pi(a_i)^{\overline\phi}\coloneq\pi(a_{\overline\phi(i\bmod4)})$. 
    Let \[\Phi\colon \Aut(\Z[\R_8])\hookrightarrow \Aut_{\mathsf{Ab}}(I)\times\Aff_1(\Z_4),\qquad \phi\mapsto(\phi|_I,\overline\phi)\] be the embedding constructed in the proof of Theorem \ref{prop:embedding}. It suffices to show that the projection
    \[
    \Pi\colon\Phi(\Aut(\Z[\R_8]))\to \Aut_{\mathsf{Ab}}(I),\qquad \Phi(\phi)\mapsto\phi|_I
    \]
    is injective, so fix $\Phi(\phi)\in\ker\Pi$. We have to show that $\overline\phi=\id_{\Z_4}$; since $\overline\phi\in\Aff_1(\Z_4)$, it suffices to show that $\overline{\phi}(0)=0$ and $\overline\phi(1)=1$. 

    Recall that each right multiplication map $S_{a_j}$ restricts to an abelian group automorphism of $I$. For every (set-theoretic) section $\pi\inv$ of $\pi$ and all $j\in\Z_8$, we have \[S_{\phi(a_j)}|_I=S_{\pi\inv(\pi(a_j)^{\overline\phi})}|_I=S_{a_{\overline\phi(j\bmod4)}}|_I\in\{S_{a_i}|_I\mid 0\leq i\leq 3\}=\{\pm S_{a_0}|_I,\pm S_{a_1}|_I\},\] 
    where in the first and last equalities we have used \eqref{eq:saj2} and \eqref{eq:saj}, respectively. It follows from Lemma \ref{lem:injective} that the assignment of $j\in\Z_4$ to $S_{a_j}|_I$ is injective; that is, the equation $S_{a_j}|_I=S_{a_i}|_I$ implies that $j=i$ in $\Z_4$. On the other hand, since $\phi$ is a ring endomorphism and $\phi|_I=\id|_I$, it is immediate that $S_{a_j}|_I=S_{\phi(a_j)}|_I=S_{a_{\overline\phi(j\bmod 4)}}$ for all $j\in\Z_8$. Therefore, taking $j=0,1$ completes the proof.
\end{proof}


\section{Commutator width in quandle algebras}\label{commutator-width}
Let $Q$ be a quandle, and let $\k$ be a commutative and associative ring with unity. Define the \textit{commutator} of elements $u, v \in \k[Q]$ to be the element $$[u, v]\coloneq uv-vu\in\k[Q].$$ Then the \textit{commutator subalgebra} $\k[Q]'$ is the $\k$-subalgebra of $\k[Q]$ generated by the set of all commutators in $\k[Q]$. For example, $Q$ is a commutative quandle if and only if $\k[Q]' =\{0\}$. Since $\varepsilon([u, v])=0$ for each commutator $[u, v] \in \k[Q]'$, we obtain the following:

\begin{lemma}\label{commutator-augmentation}
$\k[Q]' \le \Delta_R(Q)$.
\end{lemma}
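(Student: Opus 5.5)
The plan is to show that every commutator lies in the augmentation ideal, and then use that the augmentation ideal is a $\k$-subalgebra. The statement writes $\Delta_R(Q)$, which we read as $\Delta_\k(Q)=\ker(\varepsilon)$.

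First, recall that the augmentation map $\varepsilon\colon\k[Q]\to\k$ is a $\k$-algebra homomorphism. This is immediate from the definition. For basis elements $x,y\in Q$ we have $\varepsilon(xy)=1=\varepsilon(x)\varepsilon(y)$, since $xy\in Q$, and the identity extends to all of $\k[Q]$ by bilinearity of the multiplication. Consequently, for arbitrary $u,v\in\k[Q]$,
\[
\varepsilon([u,v])=\varepsilon(uv)-\varepsilon(vu)=\varepsilon(u)\varepsilon(v)-\varepsilon(v)\varepsilon(u)=0,
\]
using that $\k$ is commutative. Hence every commutator lies in $\Delta_\k(Q)$.

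Second, $\Delta_\k(Q)$ is the kernel of a $\k$-algebra homomorphism, so it is a two-sided ideal of $\k[Q]$. In particular, it is a $\k$-submodule closed under multiplication, that is, a (nonunital) $\k$-subalgebra of $\k[Q]$. Since $\k[Q]'$ is by definition the smallest $\k$-subalgebra containing all commutators, it follows that $\k[Q]'\subseteq\Delta_\k(Q)$.

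There is no real obstacle here. The only point needing care is that $\varepsilon$ is multiplicative on the nonassociative algebra $\k[Q]$. This reduces to the fact that the product of two elements of $Q$ is again a single element of $Q$, which holds for any magma and so in particular for any quandle.
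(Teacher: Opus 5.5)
Your proof is correct and takes the same route as the paper, which simply observes that $\varepsilon([u,v])=0$ for every commutator. You also spell out the step the paper leaves implicit: $\Delta_\k(Q)=\ker\varepsilon$ is a two-sided ideal, hence a subalgebra, so it contains the entire subalgebra generated by the commutators.
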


The equality in the preceding lemma does not hold in general. For example, the dihedral quandle $\R_3$ is commutative, so $\mathbb{Z}[\R_3]'=0$. On the other hand, $\Delta(\R_3) =\langle e_1,  e_2 \rangle \neq 0$. 

\par

We define the \textit{commutator length} $\cl(u)$ of an element $u \in \k[Q]'$ as 
$$\cl(u)\coloneq\min \left\{n: u= \sum_{i=1}^n \alpha_i [u_i,v_i], \textrm{ where }\alpha_i \in R\text{ and }u_i, v_i \in \k[Q]\right\}.$$
The \textit{commutator width} $\cw(\k[Q])$ is defined as 
$$\cw(\k[Q])\coloneq\sup \{\cl(u) \mid u \in \k[Q]'\}.$$

 We remark that the analogous problem of computation of  commutator width of free Lie rings \cite{Bardakov}, free metabelian Lie algebras \cite{Poroshenko} and absolutely free and free solvable Lie rings of finite rank \cite{Roman'kov} has been considered in the literature.
\par

It follows from the definition of commutator width that a quandle $Q$ is commutative if and only if  $\cw(\k[Q])=0$. Consequently, we have $\cw(\k[\R_3])=0$.
\par

\begin{theorem}[{\cite[Thm.\ 7.4]{BPS-1}}] Let $\k$ be a unital associative commutative ring. Then:
\begin{enumerate}
\item If $\T$ is a trivial quandle, then $\cw(\k[\T])=1$.
\item $\cw(\k[\R_4]) =1$.
\item $\cw(\k[\Cs(4)])=1$.
\end{enumerate}
\end{theorem}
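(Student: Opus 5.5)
The three parts share one strategy. By Lemma \ref{commutator-augmentation}, $\k[Q]'\le\Delta_\k(Q)$. So in each case I would look for a fixed element $x\in Q$ such that the $\k$-linear map
\[
\operatorname{ad}_x\colon \k[Q]\to\k[Q],\qquad v\mapsto [x,v],
\]
has image containing $\k[Q]'$. Then every $u\in\k[Q]'$ is a single commutator $[x,v]$, so $\cl(u)\le 1$. For the reverse inequality it is enough that $Q$ is not commutative, since then $\k[Q]'\neq 0$ and $\cw(\k[Q])\ge 1$. For the trivial quandle this needs $|\T|\ge 2$; the one-element quandle $\T_1$ has $\k[\T_1]'=0$, and I would note this degenerate exception explicitly.

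\emph{Trivial quandles.} Here $uv=\epsilon(v)u$ for all $u,v\in\k[\T]$, so
\[
[u,v]=\epsilon(v)u-\epsilon(u)v .
\]
Fix $x_0\in\T$ and take $w\in\Delta_\k(\T)$. Since $\epsilon(w)=0$ and $\epsilon(x_0)=1$, we get $[w,x_0]=w$. Hence every element of $\Delta_\k(\T)$ is a single commutator. Combined with Lemma \ref{commutator-augmentation}, this gives $\k[\T]'=\Delta_\k(\T)$ (closure under products is automatic because $\Delta_\k^2(\T)=0$ by Theorem \ref{deltasqzero}), and $\cw(\k[\T])=1$ when $|\T|\ge2$.

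\emph{Dihedral quandle $\R_4$.} Write $\R_4=\{a_0,\dots,a_3\}$ with $a_ia_j=a_{2j-i}$. Then
\[
[a_0,a_j]=a_{2j}-a_{-j},
\]
which gives $0$, $a_2-a_3$, $a_0-a_2$, $a_2-a_1$ for $j=0,1,2,3$. The three nonzero values are the basis $\{a_i-a_2\}_{i\ne2}$ of $\Delta_\k(\R_4)$, so $\operatorname{ad}_{a_0}$ maps onto $\Delta_\k(\R_4)$ over any $\k$. Lemma \ref{commutator-augmentation} then forces $\k[\R_4]'=\Delta_\k(\R_4)$, and every element of it is a single commutator. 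Since $\R_4$ is not commutative, $\cw(\k[\R_4])=1$.

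\emph{The quandle $\Cs(4)$.} I would write out its $4\times4$ multiplication table and repeat the previous computation. First compute all the differences $[a_i,a_j]=a_ia_j-a_ja_i$ and find their $\k$-span $V$. Then check that $V$ is closed under multiplication, so that $\k[Q]'=V$; if it is not, enlarge $V$ to the subalgebra it generates. Finally find one $x\in Q$, or failing that a single element $x\in\k[Q]$, such that the matrix of $\operatorname{ad}_x$ has image exactly $V$ over $\Z$, which then gives the same over every $\k$. This is the step I expect to be the main obstacle. Unlike the $\R_4$ case, $\k[Q]'$ may be a proper submodule of $\Delta_\k(Q)$, and the images of the individual maps $\operatorname{ad}_{a_i}$ might each miss part of $V$. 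If they do, I would try $x=a_i+\lambda a_j$ and choose $\lambda$ so that the resulting integer matrix has unimodular minors spanning $V$. That is the condition that keeps the argument valid over an arbitrary commutative ring $\k$, rather than only over a field.
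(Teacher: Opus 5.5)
The paper does not prove this statement; it is quoted from \cite{BPS-1}. So I am judging your argument on its own terms. Parts (1) and (2) are complete and correct over every $\k$: $[w,x_0]=w$ for $w\in\Delta_\k(\T)$, and $\operatorname{ad}_{a_0}$ maps $\k[\R_4]$ onto $\Delta_\k(\R_4)$. Your caveat about $\T_1$ is also right.

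Part (3), however, is not proved. You never say what $\Cs(4)$ is or do the computation, and the step you flag as ``the main obstacle'' is exactly where the argument can break. Your fallback of enlarging $V$ to the subalgebra it generates does not help either. Every sum $\sum_i\alpha_i[u_i,v_i]$ already lies in the span $V$ of commutators, so $\cw=1$ requires $\k[Q]'=V$ outright.

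To see the break concretely, suppose $\Cs(4)$ is the cyclic quandle of order $4$. The paper does not define $\Cs(4)$, so this identification is my assumption. That quandle is the tetrahedral quandle $Q=\{a_0,a_1,a_2,a_3\}=\{0,1,\omega,\omega^2\}\subset\F_4$ with $x*y=\omega x+\omega^2y$, the unique Latin quandle of order $4$.
\begin{itemize}
\item The commutators $[a_1,a_2]=a_3-a_0$, $[a_1,a_3]=a_0-a_2$, $[a_2,a_3]=a_1-a_0$ show that $\k[Q]'=\Delta_\k(Q)$ has rank $3$.
\item Since $Q$ is Latin, Proposition~\ref{prop:unital} gives $[x,e]=0$ for $e=\sum_ia_i$. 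Hence $\ker\operatorname{ad}_x\supseteq\k x+\k e$.
\item Reducing modulo a maximal ideal then shows that $\operatorname{ad}_x$ is never onto $\Delta_\k(Q)$, for \emph{any} $x\in\k[Q]$, including $x=a_i+\lambda a_j$.
\end{itemize}
So the first entry of the commutator must depend on the target $u$.

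The right reformulation is as follows. The bracket is alternating and kills $e$, so it induces an isomorphism $\Lambda^2(\k[Q]/\k e)\bij\Delta_\k(Q)$. Since $\alpha[x,y]=[\alpha x,y]$, we get $\cl(u)\le 1$ if and only if the corresponding $2$-vector is decomposable. Over a field, or over any PID, every element of $\Lambda^2$ of a free rank-$3$ module is decomposable, and this gives $\cw=1$.

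Over an arbitrary ring this fails. Take $\k=\Z[s,t,r]$ and $u=s(a_1-a_0)+t(a_2-a_0)+r(a_3-a_0)$, and suppose $u=[x,y]$.
\begin{itemize}
\item The coordinates of $\bar x$ in the basis $\bar a_1,\bar a_2,\bar a_3$ must satisfy $x_1s+x_2t+x_3r=0$, because $\bar x\wedge\bar x\wedge\bar y=0$.
\item This forces $x_i\in(s,t,r)$, and likewise for $y$.
\item Hence $u\in(s,t,r)^2\Delta_\k(Q)$, which is false since its coefficient $s$ is not in $(s,t,r)^2$.
\end{itemize}
So for this quandle part (3) needs a hypothesis on $\k$, and no single $\operatorname{ad}_x$ can rescue your plan. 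For the non-Latin quandles of order $4$, some $\operatorname{ad}_{a_i}$ does map onto $\Delta_\k(Q)$, so there your method would work, but it still has to be carried out.
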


\begin{problem}[{\cite[Prob.\ 7.5]{BPS-1}}]\label{prob:cw}
Compute the commutator width of the quandle algebras of dihedral and free quandles.
\end{problem}

\subsection{Results}
Using the rack and quandle library in \cite{VY} and an exhaustive search in \texttt{GAP} \cite{GAP4}, we obtained the following results over finite fields $\F_q$. This partially answers Problem \ref{prob:cw}. The \texttt{GAP} code is available in a GitHub repository of the second author \cite{GitHub}.

\begin{theorem}\label{Thm:cw-R_n}
    \phantom{-}
    \begin{enumerate}
        \item Let $1\leq n\leq 21$ with $n\neq 3$. Then $\cw(\F_2[\R_n])=1$.
        \item Let $1\leq n\leq 14$ with $n\neq 3$, and let $p\in\{2,3\}$. Then $\cw(\F_p[\R_n])=1$.
        \item Let $1\leq n\leq 9$ with $n\neq 3$, and let $q\in\{2,3,4,5,7\}$. Then
        \[
        \cw(\F_q[\R_n])=\begin{cases}
            2 &\text{if }(n,q)=(5,5),\\
            1&\text{otherwise.}
        \end{cases}
        \]
    \end{enumerate}
\end{theorem}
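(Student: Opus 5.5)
The statement is established by exhaustive finite computation, so the plan is to turn $\cw(\F_q[\R_n])$ into a finite linear-algebra problem that a computer can settle for each pair $(n,q)$ in range.

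First, fix the basis $\R_n=\{a_0,\dots,a_{n-1}\}$ with $a_ia_j=a_{2j-i}$. Identify $\F_q[\R_n]\cong\F_q^n$, and view the commutator map $(u,v)\mapsto[u,v]$ as an alternating bilinear map $\F_q^n\times\F_q^n\to\F_q^n$. Its structure constants are $[a_i,a_j]=a_{2j-i}-a_{2i-j}$.

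Next, compute the commutator subalgebra $\k[Q]'$. Start from the $\F_q$-span $V$ of the $[a_i,a_j]$, which contains all commutators by bilinearity. Then close $V$ under multiplication by repeatedly adjoining products of basis vectors until the span stabilizes. This is a finite Gaussian-elimination loop.

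Since the coefficients $\alpha_i$ in the definition of $\cl$ can be absorbed into $u_i$, we have $\cl(u)\le 1$ exactly when $u$ lies in the image set $C=\{[u,v]\mid u,v\in\F_q^n\}$. So $\cw=1$ amounts to the claim that $C\supseteq \k[Q]'\setminus\{0\}$ and that $\k[Q]'\neq0$. The second condition holds because $\R_n$ is noncommutative for $n\neq 3$ (for $n=1$ or $2$ the quandle is trivial, and the earlier theorem of \cite{BPS-1} already gives $\cw=1$). The first condition can be checked by enumerating pairs $(u,v)$ modulo symmetries, which reduces the search considerably:
\begin{itemize}
\item scalar rescaling of $u$ and $v$;
\item replacing $v$ by $v+\lambda u$;
\item the action of $\Aut(\R_n)\cong\Aff_1(\Z_n)$ (or its analogue for even $n$), which preserves $C$.
\end{itemize}
It also helps to use the element $e$ of Proposition~\ref{prop:unital}: $[e,x]$ is computable in closed form, and $e$ spans a large part of the image.

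To show $\cw\ge 2$ for $(n,q)=(5,5)$, find an explicit element $w\in\F_5[\R_5]'$ that lies outside $C$. Then confirm that every element of $\F_5[\R_5]'$ is a sum of two commutators, for instance by checking that $C+C\supseteq \k[Q]'$. This gives exactly $\cw=2$.

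The main obstacle is computational: there are $q^{2n}$ pairs, which is infeasible for $q=2$ with $n$ up to $21$. I would therefore first try a cheaper sufficient test. For each target $w$, run over $u$ only, modulo the symmetries above, and solve the linear system $L_u(v)=w$, where $L_u=[u,\cdot]$ is linear in $v$. This costs roughly $q^n\cdot n^3$ and amounts to covering $\k[Q]'$ by the union of the images of the linear maps $L_u$. Once one finds a small family of $u$'s whose images already cover $\k[Q]'$, the remaining $n$ should become cheap.
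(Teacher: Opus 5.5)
Your plan is the paper's own: its proof consists solely of an exhaustive \texttt{GAP} search using the library of \cite{VY}. Your reduction ($\cl(w)\le 1$ iff $w\in C=\bigcup_u\operatorname{im}L_u$, the symmetry reductions, and the test $C+C\supseteq\k[Q]'$ with a witness $w\notin C$ for $(n,q)=(5,5)$) is a sound way to organize that search. Two points in the proposal are wrong, however.

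\textbf{The case $n=1$.} Your claim that $\R_n$ is noncommutative for every $n\neq 3$ fails at $n=1$. Since $a_0a_1=a_2$ and $a_1a_0=a_{-1}$, the quandle $\R_n$ is commutative exactly when $n\mid 3$. Concretely, $\F_q[\R_1]=\F_q a_0$ with $a_0^2=a_0$ is commutative, so $\F_q[\R_1]'=0$. The paper's own convention is that a quandle is commutative iff $\cw=0$; this is how it obtains $\cw(\k[\R_3])=0$. Under that convention, $\cw(\F_q[\R_1])=0$, not $1$. The quoted result $\cw(\k[\T])=1$ for trivial $\T$ tacitly requires $|\T|\ge 2$. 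So the $n=1$ instance of the statement cannot be proved as written. It must be excluded, or one must adopt a convention such as $\cl(0)=1$. Only $n=2$ is genuinely covered by the trivial-quandle result.

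\textbf{The role of $e$.} Your heuristic that ``$e$ spans a large part of the image'' is backwards.
\begin{itemize}
\item For odd $n$, $\R_n$ is Latin, so Proposition~\ref{prop:unital} gives $ex=xe=\epsilon(x)e$. Thus $e$ is central and $L_e=0$.
\item For even $n$, $ea_i=e$ while $a_ie=2\sum_k a_k$, with the sum over $k$ of the same parity as $i$. Hence $\operatorname{im}L_e$ is at most the line spanned by $\sum_{k\text{ even}}a_k-\sum_{k\text{ odd}}a_k$.
\end{itemize}
The correct use of $e$, for odd $n$, is twofold. First, $L_{u+\lambda e}=L_u$, so you may enumerate $u$ modulo $\k e$. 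Second, $\operatorname{span}(u,e)\subseteq\ker L_u$, so $\operatorname{rank}L_u\le n-2$.

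Consequently, when $\k[Q]'$ is the whole augmentation ideal (as for $\R_5$ over $\F_5$, where it is $4$-dimensional), no single $L_u$ is onto. The covering must then use at least $q+1$ distinct images. This is harmless if you organize the computation per $u$: compute each image once on orbit representatives and mark its elements, rather than looping over targets $w$.
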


\begin{theorem}\label{Thm: cw-Q}
    Let $Q$ be a noncommutative quandle of order at most $7$. Then for all $q\in\{2,3,4,5\}$, we have
    \[
    \cw(\F_q[Q])=\begin{cases}
        2&\text{if }Q\cong\R_5\text{ and }q=5,\\
        1&\text{otherwise.}
    \end{cases}
    \]
\end{theorem}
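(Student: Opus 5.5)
This is a computational statement, so the plan is an exhaustive computer search in \texttt{GAP}, run as for Theorem \ref{Thm:cw-R_n}, plus a few structural reductions that keep the search feasible. First I would list all noncommutative quandles of order at most $7$ up to isomorphism, using the library of \cite{VY}. Isomorphic quandles give isomorphic quandle algebras, and $\cw$ is an isomorphism invariant, so one representative per class suffices. The commutative ones (trivially of width $0$) are discarded; this is decidable by checking the multiplication table. For each remaining $Q$ of order $m$ and each $q\in\{2,3,4,5\}$, I would regard $\F_q[Q]$ as $\F_q^m$ with the structure constants coming from the table.

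The main step is to compute $\k[Q]'$ and the set of values of a single scaled commutator. Over a field, $\alpha[u,v]=[\alpha u,v]$, so $\cl(w)\le 1$ exactly when $w\in C\coloneq\{[u,v]\mid u,v\in\F_q[Q]\}$. I would compute $\k[Q]'$ as the subalgebra generated by the basic commutators $[x,y]$ with $x,y\in Q$. These span the linear span of all commutators, since the commutator is bilinear. One then closes this span under multiplication, iterating until the dimension stabilizes.

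Next I would enumerate $C$. The map $(u,v)\mapsto uv-vu$ is bilinear and alternating, so $[u,v]$ depends only on $u\wedge v$. This means one can take $u$ up to scalars and $v$ modulo $\operatorname{span}(u)$. Further reductions come from the fact that $e=\sum x$ satisfies $ex=\epsilon(x)e$ (Proposition \ref{prop:unital}), and for Latin $Q$ also $xe=\epsilon(x)e$. With these, the search space is at most about $q^{2m}/q^{3}$, roughly $5^{11}$ pairs in the worst case $m=7$, $q=5$. That is feasible but heavy, and I expect it to be the main computational obstacle. I would organize the loop by first fixing $u$ from a set of orbit representatives under $\Aut(Q)$, which acts on everything, and then sweeping $v$, recording the images in a bit-array indexed by $\F_q^m$.

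To conclude, if $C\supseteq\k[Q]'$ then $\cw=1$ (it is $\ge1$ since $Q$ is noncommutative). Otherwise one checks that $C+C\supseteq\k[Q]'$, which gives $\cw=2$. The prediction is that the sole failure of the first test is $Q=\R_5$, $q=5$, consistent with Theorem \ref{Thm:cw-R_n}(3). For that case I would reuse the computation there and exhibit an explicit element of $\k[Q]'\setminus C$ as a certificate. For the width-$1$ cases, a short witness list (one pair $(u,v)$ per element of $\k[Q]'$) suffices as a certificate.
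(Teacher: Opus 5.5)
Your proposal is correct and is essentially the paper's own argument: the paper's proof consists only of an exhaustive \texttt{GAP} search over the quandles in the library of \cite{VY}, with the code in \cite{GitHub}, comparing the single commutators against $\k[Q]'$. One minor remark: for fixed $u$ the set $\{[u,v]\mid v\in\F_q[Q]\}$ is just the image of the linear map $v\mapsto[u,v]$, so a rank computation can replace your sweep over $v$; also, the reduction modulo $e$ (and hence your $q^{2m-3}$ estimate) is only available when $Q$ is Latin.
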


\begin{theorem}\label{thm:cw-8}
    Let $Q$ be a quandle of order $8$, and let $p\in\{2,3\}$. If $Q$ is isomorphic to the quandle \texttt{LRQ.Quandle(8, 1367)} in the library of \cite{VY}, then $\cw(\F_p[Q])=2$. Otherwise, $\cw(\F_p[Q])=1$.
\end{theorem}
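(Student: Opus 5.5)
The statement is a finite computational claim, so the proof will be an exhaustive machine search, organized to make it both feasible and verifiable. I will first use the library of \cite{VY} to enumerate all quandles of order $8$ up to isomorphism, and for each $Q$ and each $p\in\{2,3\}$ set up $\F_p[Q]$ as an $8$-dimensional $\F_p$-algebra by its structure constants. Since $\k[Q]'$ is the subalgebra generated by commutators, I will compute it by starting from the span of $[x,y]$ with $x,y\in Q$ (these span all commutators by bilinearity) and closing under multiplication until the span stabilizes. For commutative $Q$ the commutator subalgebra is zero and $\cw=0$, so those cases are excluded or treated separately.

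The next step is to obtain the set $C_1$ of elements of commutator length at most $1$. Because $\F_p$ is a field, $\alpha[u,v]=[\alpha u,v]$, so $C_1$ is just the image of the bilinear map $(u,v)\mapsto uv-vu$ on $\F_p^8\times\F_p^8$. To keep the search within $p^{16}$ pairs, I will reduce it: fix $u$, note that $v\mapsto[u,v]$ is linear, and take the image as the column space of an $8\times 8$ matrix. Then $C_1=\bigcup_u\operatorname{im}[u,-]$, which requires only $p^8$ rank computations. In addition, this union only needs $u$ up to scalars and up to the action of $\Aut(Q)$, which permutes commutators.

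The width is then read off directly. If $C_1\supseteq\k[Q]'$ then $\cw=1$. Otherwise I will form $C_2=C_1+C_1$ as a set, which has at most $|C_1|^2$ elements and is small enough to enumerate, and check whether it covers $\k[Q]'$. The claim is that this happens exactly for \texttt{LRQ.Quandle(8, 1367)}, and there with $C_2=\k[Q]'$, giving $\cw=2$ for both primes.

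The main obstacle is certifying the exceptional quandle. For the generic cases a single witness decomposition per element suffices. For the exceptional one, however, I must exhibit some $w\in\k[Q]'$ that lies outside every subspace $\operatorname{im}[u,-]$, and that requires the full enumeration over $u$, including the correctness of the symmetry reduction. I would try first to find such a $w$ by hand-guided search, looking for instance for an $\Aut(Q)$-invariant element of $\k[Q]'$. I would then record that the code in \cite{GitHub} checks all $u$ explicitly, so the claim rests on a reproducible computation rather than on a structural argument.
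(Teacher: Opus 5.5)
Your proposal is correct and follows the paper's approach: the paper establishes this theorem purely by an exhaustive \texttt{GAP} search over the order-$8$ quandles in the library of \cite{VY} (code in \cite{GitHub}). Your organization of that search, computing $C_1$ as the union of the images of the linear maps $v\mapsto[u,v]$ and then testing whether $C_1$ or $C_1+C_1$ equals $\k[Q]'$, is a sound way to carry it out.

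Two small points. First, if you use the $\Aut(Q)$-reduction, $C_1$ is the $\Aut(Q)$-saturation of $\bigcup_{u\in R}\operatorname{im}[u,-]$ over a set $R$ of orbit representatives, not that union itself. Second, the commutative case never arises here, so nothing is excluded. In a commutative quandle every $L_x$ is bijective, so each fibre $\{(x,y)\mid xy=z\}$ has $|Q|$ elements, and the swap $(x,y)\mapsto(y,x)$ acts on it with the single fixed point $(z,z)$. Hence every finite commutative quandle has odd order.
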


We note that \texttt{LRQ.Quandle(8, 1367)} is the quandle with underlying set $\{b_1,\dots,b_8\}$ and right-multiplication maps
\begin{align*}
    S_{b_1}=S_{b_3}&\coloneq(2\,5\,7)(4\,6\,8),\\
    S_{b_2}=S_{b_8}&\coloneq(1\,7\,5)(3\,6\,4),\\
    S_{b_4}=S_{b_5}&\coloneq(1\,2\,7)(3\,6\,8),\\
    S_{b_6}=S_{b_7}&\coloneq(1\,5\,2)(3\,4\,8)
\end{align*}
in cycle notation.

As far as the authors are aware, the above results provide the first examples of quandle algebras whose commutator widths are greater than $1$. Explicitly, for $\R_5=\{a_0,\dots,a_4\}$, the \texttt{GAP} search found that $\dim_{\F_5}(\F_5[\R_5]')=4$, and exactly $40$ out of the $5^4=625$ vectors in $\F_5[\R_5]'$ have commutator length $2$; for example,
\[
\cl(2a_0+a_1+a_2+2a_3+4a_4)=2.
\]
For $Q=\texttt{LRQ.Quandle(8, 1367)}$, exactly $8$ of the $2^7=128$ vectors in $\F_2[Q]'$ have commutator length $2$; for example,
\[
\cl(b_3+b_5+b_6+b_8)=2.
\]
Exactly $324$ of the $3^7=2{,}187$ vectors in $\F_3[Q]'$ have commutator length $2$; for example,
\[
\cl(2b_3+2b_5+2b_6+2b_7+b_8)=2.
\]


\begin{ack}
  We would like to thank Valeriy Bardakov for helpful comments. The first author would also like to thank the participants of the \'Evariste Galois Seminar. The second author was supported by the K.\ Leroy Irvis Summer Research Fellowship at the University of Pittsburgh.
\end{ack}


\begin{thebibliography}{HD}
\bibitem{Andruskiewitsch}  N. Andruskiewitsch and M. Gra\~{n}a,  \textit{From racks to pointed Hopf algebras}, Adv. Math. {\bf 178} (2003), no. 2, 177--243.

\bibitem{Bardakov} V. G. Bardakov, \textit{Computation of commutator length in free groups}, Algebra Logika {\bf 39} (4) (2000),  395--440.

\bibitem{BDS} V. G. Bardakov, P. Dey, and M. Singh, \textit{Automorphism groups of quandles arising from groups},  Monatsh. Math. {\bf 184} (2017), 519--530.

\bibitem{BE} V. Bardakov and M. Elhamdadi, \textit{Idempotents and powers of ideals in quandle rings}, to appear in Arab. J. Math. (Springer).

\bibitem{BarTimSin} V. G. Bardakov, T. R. Nasybullov, and M. Singh, \textit{Automorphism groups of quandles and related groups},  Monatsh. Math.  {\bf 189} (2019), 1--21.

\bibitem{BPS} V. G. Bardakov, I. B. S. Passi, and M. Singh, \textit{Quandle rings},  J. Algebra Appl. {\bf 18} (2019), no. 8, 1950157, 23 pp.

\bibitem{BPS-1} V. G. Bardakov, I. B. S. Passi, and M. Singh, \textit{Zero-divisors and idempotents in quandle rings}, Osaka J. Math. {\bf 59} (2022),
611--637.

\bibitem{BSS1} V. G. Bardakov, Mahender Singh, and Manpreet Singh, \textit{Free quandles and knot quandles are residually finite}, Proc. Amer. Math. Soc. {\bf 147} (2019), no. 8, 3621--3633.

\bibitem{BSS2} V. G. Bardakov, Mahender Singh, and Manpreet Singh, \textit{Link quandles are residually finite},  Monatsh. Math. {\bf 191} (2020), 679--690.


\bibitem{BES} V. G. Bardakov, M. Elhamdadi, and M. Singh, \textit{Algebra of the Yang--Baxter equation: Skew braces, quandles, and cohomology}, to appear in Springer Monographs in Mathematics.
	
\bibitem{Carter} J. Scott Carter, \textit{A survey of quandle ideas}, Introductory lectures on knot theory, 22--53, Ser. Knots Everything, 46, World Sci. Publ., Hackensack, NJ (2012).

\bibitem{circulant} P.~J. Davis, {\it Circulant matrices}, A Wiley-Interscience Publication Pure and Applied Mathematics, John Wiley \& Sons, New York-Chichester-Brisbane, 1979; MR0543191

\bibitem{Eisermann} M. Eisermann, \textit{Yang--Baxter deformations of quandles and racks}, Algebr. Geom. Topol. {\bf 5} (2005), 537--562.

\bibitem{EFT} M. Elhamdadi, N. Fernando, and B. Tsvelikhovskiy, \textit{Ring theoretic aspects of quandles}, J. Algebra {\bf 526} (2019), 166--187.

\bibitem{Elhamdadi2012} M. Elhamdadi, J. Macquarrie,  and R. Restrepo,  \textit{Automorphism groups of quandles}, J. Algebra Appl. {\bf 11} (2012), 1250008, 9 pp.

\bibitem{MBM} M. Elhamdadi, B. Nunez, and M. Singh, \textit{Enhancements of link colorings via idempotents of quandle rings}, J. Pure Appl. Algebra {\bf227} (2023), no. 10, Paper No. 107400.

\bibitem{MBMD} M. Elhamdadi, B. Nunez, M. Singh, and D. Swain, \emph{Idempotents, free products and quandle coverings}, Internat. J. Math. 34 (2023), no. 3, Paper No. 2350011, 27 pp.

\bibitem{ETV} M. Elhamdadi, L. Ta, and B. Virgin, \emph{Idempotents in quandle algebras}, in preparation. 

\bibitem{GAP4} The GAP Group, GAP -- Groups, Algorithms, and Programming, Version 4.16.0; 2026. \url{https://www.gap-system.org}

\bibitem{Joyce-Thesis} D. Joyce, \textit{An algebraic approach to symmetry with applications to knot theory}, PhD Thesis, University of Pennsylvania, 1979. vi+63 pp.

\bibitem{Joyce} D. Joyce, \textit{A classifying invariant of knots, the knot quandle}, J. Pure Appl. Algebra {\bf 23} (1982), 37--65.

\bibitem{Kamada} S. Kamada, \textit{Knot invariants derived from quandles and racks}, Invariants of knots and 3-manifolds (Kyoto, 2001), 103--117 (electronic), Geom. Topol. Monogr., 4, Geom. Topol. Publ., Coventry, (2002).

\bibitem{Loos}  O. Loos,  \textit{Reflection spaces and homogeneous symmetric spaces}, Bull. Amer. Math. Soc. {\bf 73} (1967) 250--253.

\bibitem{Matveev} S. V. Matveev, \textit{Distributive groupoids in knot theory}, (Russian) Mat. Sb. (N.S.) {\bf119(161)} (1982), no. 1, 78--88, 160.

\bibitem{NP} P. Narayanan and S. Panja, \textit{Idempotents in integral ring of dihedral quandle}, 2022. Preprint, arXiv: 2206.10386.

\bibitem{Nelson} S. Nelson, \textit{The combinatorial revolution in knot theory}, Notices Amer. Math. Soc. {\bf 58} (2011), 1553--1561.

\bibitem{poole} D.~G. Poole, \textit{The stochastic group}, Amer. Math. Monthly {\bf 102} (1995), no.~9, 798--801; MR1357724

\bibitem{Poroshenko} E. N. Poroshenko, \textit{Commutator width of elements in a free metabelian Lie algebra}, Algebra and Logic {\bf 53} (2014), 377--396.

\bibitem{Roman'kov} V. A. Roman'kov: \textit{The commutator width of some relatively free Lie algebras and nilpotent groups},  Sib. Math. J. {\bf 57} (2016), no. 4, 679--695

\bibitem{GitHub} L. Ta, \textit{Commutator width of quandle algebras}, GitHub, \url{https://github.com/luc-ta/commutator-width-of-quandle-algebras/tree/main}

\bibitem{LT2} L. Ta, \textit{From affine algebraic racks to Leibniz algebras and Yang--Baxter operators}, J. Algebra \textbf{712} (2027), 55--100; MR5105987

\bibitem{LT1} L. Ta, \textit{On medial Latin quandles and affine modules}, to appear in Canad. Math Bull., doi:10.4153/S0008439526102306

\bibitem{LT} L. Ta, \textit{Structure theory of commutative quandles and medial Latin quandles}. Blog post. \url{https://luc-ta.github.io/blog/2026/commutative-quandles/}

\bibitem{VY} P. Vojt\v echovsk\'y{} and S.~Y. Yang, \emph{Enumeration of racks and quandles up to isomorphism}, Math. Comp. \textbf{88} (2019), no. 319, 2523--2540.

\end{thebibliography}
\end{document}